\newtheorem{theo}{Theorem}[section]
\newtheorem{propo}[theo]{Proposition}
\newtheorem{defi}[theo]{Definition}
\newtheorem{coro}[theo]{Corollary}
\newtheorem{rem}[theo]{Remark}
\newtheorem{exam}[theo]{Example}
\theoremstyle{definition}
\theoremstyle{remark}
\newcommand\colim{\mathop{\rm colim}\limits}
\newcommand\Ind{\operatorname{Ind}}
\newcommand\Inj{\operatorname{Inj}}
\newcommand\Ort{\operatorname{Ort}}
\newcommand\PsPb{\operatorname{PsPb}}
\newcommand\Eq{\operatorname{Eq}}
\newcommand\Ins{\operatorname{Ins}}
\newcommand\pres{\operatorname{pres}}
\newcommand\op{\operatorname{op}}
\newcommand\id{\operatorname{id}}
\newcommand\Id{\operatorname{Id}}
\newcommand\Set{\operatorname{\bf Set}}
\newcommand\Top{\operatorname{\bf Top}}
\newcommand\cof{\operatorname{cof}}
\newcommand\ca{\mathcal {A}}
\newcommand\cb{\mathcal {B}}
\newcommand\cc{\mathcal {C}}
\newcommand\cd{\mathcal {D}}
\newcommand\ce{\mathcal {E}}
\newcommand\ck{\mathcal {K}}
\newcommand\cl{\mathcal {L}}
\newcommand\cm{\mathcal {M}}
\newcommand\crr{\mathcal {R}}
\newcommand\ct{\mathcal {T}}
\newcommand\cp{\mathcal {P}}
\newcommand\co{\mathcal {O}}
\newcommand\cx{\mathcal {X}}
\date{December 27, 2011}
\begin{document}
\title[Class-locally presentable categories]
{Class-locally presentable and class-accessible categories}
\author[B. Chorny and J. Rosick\'{y}]
{B. Chorny and J. Rosick\'{y}$^*$}
\thanks{ $^*$ Supported by MSM 0021622409 and GA\v CR 201/11/0528. The hospitality of the Australian National University 
is gratefully acknowledged.} 
\address{\newline B. Chorny\newline
Department of Mathematics\newline
University of Haifa at Oranim\newline
Tivon, Israel\newline
chorny@math.haifa.ac.il
\newline\newline 
J. Rosick\'{y}\newline
Department of Mathematics and Statistics\newline
Masaryk University, Faculty of Sciences\newline
Brno, Czech Republic\newline
rosicky@math.muni.cz
}

\begin{abstract}
We generalize the concepts of locally presentable and accessible categories. Our framework includes such categories as small presheaves over 
large categories and ind-categories. This generalization is intended for applications in the abstract homotopy theory.  
\end{abstract}
\keywords{class-locally presentable category, class-accessible category, weak factorization system}

\maketitle
 
\section{Introduction}
The concept of a locally presentable category was introduced by Gabriel and Ulmer \cite{GU}. It was further generalized by Makkai and Par\'e 
who introduced accessible categories in the monograph \cite{MP} which convincingly demonstrated the importance of this notion. Since then 
locally presentable and accessible categories found numerous applications in algebra and, most prominently, in homotopy theory, where the concept 
of a locally presentable category was adapted by J. Smith as a foundation for his theory of combinatorial model categories 
\cite{Beke,D-universal,D}.

In the passed decade there were several interesting examples of non-combinatorial model structures constructed on non-locally presentable categories 
\cite{BCR, CD, CI, I}. The goal of our work is to extend the notions of the locally presentable and accessible categories, so that it could serve 
as a categorical foundation for an appropriate generalization of J.~Smith's theory. Such generalization will appear in the companion article \cite{CR}. 
 
The definition of an accessible category consists of a combination of a completeness condition and a smallness condition.  The smallness condition 
demands the existence of a set $\ca$ of $\lambda$-presentable objects such that each object is a $\lambda$-filtered colimit of objects of this set.
In our paper, we are dropping the assumption that $\ca$ is a set and we call the resulting concept a class-accessible category. This is not 
a new idea. Long before the appearance of \cite{MP}, this concept was introduced in \cite{BH} under the name of a $\lambda$-algebroidal category. 
The main disadvantage of $\ca$ being a class is that images of its objects by a functor can have arbitrarily large presentation ranks. 
Nevertheless, we will show that surprisingly many results about accessible categories can be generalized to the class-accessible setting. 
In particular, class-accessible categories are closed under lax limits. Furthermore, there is a satisfactory theory of injectivity and
weak factorization systems in class-locally presentable categories which is a starting point for systematic applications in homotopy theory 
(see \cite{CR})
leaning on several existing results in this direction (see \cite{BCR}, \cite{C}, \cite{C1} and \cite{CD}). 

The main example of a class-accessible category which is not accessible is the category of small presheaves on a large category. The omnipresence 
of this category was the main motivation for our work. Since orthogonality and factorization systems behave even better than injectivity and weak
factorization systems, a way is open for dealing with small sheaves. An early work in this direction is \cite{W}.  

We have to distinguish between sets and classes. This could be formalized by saying that we are working in the G\" odel-Bernays
set theory. Each set is a class, class which is not a set is called proper. A category consists of a class of objects but $\hom(A,B)$ 
are sets for every pair of objects $A,B$. Sometimes, such categories are called locally small (which forces us to change the terminology 
introduced by topologists \cite{Dr} for classes of morphisms satisfying the cosolution set condition --- we call them  
cone-coreflective in this paper). A category is small if it has a set of objects. 

\section{Class-accessible categories}
Let us recall that a category $\ck$ is called $\lambda$-\textit{accessible}, where $\lambda$ is
a regular cardinal, provided that
\begin{enumerate}
\item[(1)] $\ck$ has $\lambda$-filtered colimits,
\item[(2)] $\ck$ has a set $\ca$ of $\lambda$-presentable objects such that every object of $\ck$ is a 
$\lambda$-filtered colimit of objects from $\ca$.
\end{enumerate}

An object $K$ in $\ck$ is called $\lambda$-\textit{presentable} if its hom-functor 
$$
\hom(K,-) \colon \ck\to\Set
$$ 
preserves $\lambda$-filtered colimits. A category is \textit{accessible} if it is $\lambda$-accessible for some regular 
cardinal $\lambda$. A cocomplete ($\lambda$-)accessible category is called \textit{locally ($\lambda$-)presentable}. All 
needed facts about locally presentable and accessible categories can be found in \cite{AR} or \cite{MP}.  

\begin{defi}\label{def2.1}
{\em
A category $\ck$ is called \textit{class-$\lambda$-accessible}, where $\lambda$ is
a regular cardinal, provided that
\begin{enumerate}
\item[(1)] $\ck$ has $\lambda$-filtered colimits,
\item[(2)] $\ck$ has a class $\ca$ of $\lambda$-presentable objects such that every object of $\ck$ is a 
$\lambda$-filtered colimit of objects from $\ca$.
\end{enumerate}

A category is \textit{class-accessible} if it is class-$\lambda$-accessible for some regular cardinal $\lambda$. A complete and cocomplete 
class-$\lambda$-accessible category is called \textit{class-locally $\lambda$-presentable}. A category is \textit{class-locally presentable} 
if it is  class-locally $\lambda$-presentable for some regular cardinal $\lambda$.

Finally, a category $\ck$ is called \textit{class-preaccessible} if it satisfies (2) for some regular cardinal $\lambda$.
}
\end{defi}

\begin{exam}\label{ex2.2}
{
\em
(1) Each $\lambda$-accessible category is class-$\lambda$-accessible. Sin\-ce each locally presentable category 
is complete, each locally $\lambda$-pre\-sen\-tab\-le category is class-locally $\lambda$-presentable. 

(2) Given a category $\ca$, $\cp(\ca)$ will denote the category of \textit{small presheaves} on $\ca$. These are
functors $\ca^{\op}\to\Set$ which are small colimits of hom-functors. For a small category $\ca$, we have
$\cp(\ca)=\Set^{\ca^{\op}}$.
The category $\cp(\ca)$ is always class-finitely-accessible (= class-$\omega$-acce\-ssi\-ble) because each small presheaf 
is a small filtered colimit of finite colimits of hom-functors and the latter are finitely presentable. This relies on a general 
fact that arbitrary colimits may be expressed as filtered colimits of finite colimits. 
$\cp(\ca)$ is always cocomplete but not necessarily complete. For instance, it does not have a terminal object 
in the case when $\ca$ is a large discrete category (it means that it has a proper class of objects 
and the only morphisms are the identities). This explains why we added completeness into the definition of a class-locally 
presentable category.

(3) The category $\Top$ of topological spaces is not class-locally presentable. The reason is that the only
presentable objects are discrete spaces.

(4) Given a category $\ca$, let $\Ind(\ca)$ be the full subcategory of $\cp(\ca)$ consisting of small filtered
colimits of hom-functors. This construction was introduced by Grothendieck (see \cite{AGV}) and $\Ind(\ca)$ 
is always class-finitely-accessible. In fact, each class-finitely-accessible category 
$\ck$ is equivalent to $\Ind(\ca)$ for $\ca$ being the full subcategory of $\ck$ consisting of finitely presentable 
objects. The proof is the same as in the case of finitely accessible categories.

This can be generalized to each regular cardinal $\lambda$ by introducing $\Ind_\lambda(\ca)$ as the full
subcategory of $\cp(\ca)$ consisting of small $\lambda$-filtered colimits of hom-functors. Then $\Ind_\lambda(\ca)$ 
is always class-$\lambda$-accessible and, conversely, each class-$\lambda$-accessible category $\ck$ is equivalent
to $\Ind_\lambda(\pres_\lambda\ck)$ where $\pres_\lambda\ck$ is the full subcategory of $\ck$ consisting 
of $\lambda$-presentable objects. The proof is the same as in \cite{AR} 2.26.
}
\end{exam}

\begin{rem}\label{re2.3}
{\em
(1) For each category $\ca$, the Yoneda embedding yields the functor
$$
Y\colon\ca\to\cp(\ca)
$$
making $\cp(\ca)$ a free completion of $\ca$ under small colimits. Analogously,
$$
Y:\ca\to\Ind_\lambda(\ca)
$$
is a free completion of $\ca$ under small $\lambda$-filtered colimits (the second $Y$ is the codomain restriction of the first one). 
 
(2) The category $\Ind_\lambda(\ca)$ is cocomplete if and only if $\ca$ is $\lambda$-\textit{co\-com\-ple\-te} in the sense 
that it has $\lambda$-\textit{small colimits}, i.e., colimits of diagrams $D:\cd\to\ca$ such that the category $\cd$ has less 
than $\lambda$ morphisms. The proof is the same as in the case of accessible categories, i.e., when $\ca$ is small (see, 
e.g., \cite{AR}, 1.46). One can also proceed as follows.

(3) Given a class-$\lambda$-accessible category $\ck$, we can express the class $\pres_\lambda\ck$ as a union
of an increasing chain of small subcategories indexed by all ordinals
$$
\ca_0\subseteq\ca_1\subseteq\dots\ca_i\subseteq\dots
$$
This results in writing $\ck$ as a union of a chain of  $\lambda$-accessible categories
$$
\Ind_\lambda\ca_0\subseteq\Ind_\lambda\ca_1\subseteq\dots\Ind_\lambda\ca_i\subseteq\dots
$$
and functors preserving $\lambda$-filtered colimits and $\lambda$-presentable objects. If $\ck$ is class-locally $\lambda$-presentable, 
we can assume that the first chain consists of subcategories closed under $\lambda$-small colimits. Then the second chain consists of colimit 
preserving functors. This proves (2).

(4) Recall that, given a small full subcategory $\ca$ of a category $\ck$ and an object $K$ in $\ck$, the \textit{canonical diagram}
of $K$ (with respect to $\ca$) is the forgetful functor $D:\ca\downarrow K\to\ck$. Here, $\ca\downarrow K$ consists of all morphisms
$a:A\to K$ with $A$ in $\ca$ and $D$ sends $a$ to $A$. We say that $K$ is a \textit{canonical colimit} of $\ca$-objects if the family $a:A\to K$
form a colimit cocone from the canonical diagram. If $\ck$ is $\lambda$-accessible than each object of $\ck$ is a canonical colimit 
of its canonical diagram with respect to $\pres_\lambda\ck$ and this canonical diagram is $\lambda$-filtered (see \cite{AR}, 
Proposition 2.8).

Now, let $\ck$ be a class-$\lambda$-accessible category. A consequence of (3) is that, for each object $K$ of $\ck$, there is a small
full subcategory $\ca$ of $\pres_\lambda\ck$ such that $K$ is a canonical $\lambda$-filtered colimit of $\ca$-objects.

(5) The category $\ca\downarrow K$ used above is a special case of a general \textit{comma-category} $F_1\downarrow F_2$ where
$F_1:\ck_1\to\cl$ and $\ck_2\to\cl$ are functors: we take $F_1$ as the inclusion of $\ca$ to $\ck$ and $F_2$ the functor from 
the one-morphism category to $\ck$ with the value $K$. The category $F_1\downarrow F_2$ has morphisms $f:F_1K_1\to F_2K_2$ as
objects and morphisms $f\to f'$ are pairs of morphisms $k_1,k_2$ for which the square
$$
\xymatrix@=4pc{
F_1K_1 \ar [r]^{f} \ar [d]_{F_1k_1}& F_2K_2 \ar [d]^{F_2k_2}\\
F_1K_1'\ar [r]_{f'}& F_2K_2'
}
$$
commutes.

Another special case is the category of morphisms $\ck^\to=\Id\downarrow\Id$.

(6) Every locally presentable category is cowellpowered, which does not generalize to class-locally presentable ones. For example,
the ordered class $\ck$ of ordinals with the added largest element is class-locally $\omega$-presentable with isolated ordinals as 
$\omega$-presentable objects. But $\ck$ is not cowellpowered. Hence a class-locally presentable category does not need to be locally 
ranked in the sense of \cite{AHRT}. Thus Theorem III.7 there does not imply our \ref{th4.3}.
}
\end{rem}

We will now give a criterion for when $\Ind_\lambda(\ca)$ is class-locally presentable, if $\ca$ is $\lambda$-cocomplete, i.e., 
when $\Ind_{\lambda}(\ca)$ is complete. We will need the following concepts. 
A set $\cx$ of objects of a category $\ck$ is called \textit{weakly initial} if each object $K$ of $\ck$ admits 
a morphism $X\to K$ with $X\in\cx$. A category $\ck$ is called \textit{approximately complete} if, for each diagram
$D:\cd\to\ck$, the category of cones $K\to D$ over $D$ has a weakly initial set. 

\begin{theo}\label{th2.4}
Let $\ca$ be a $\lambda$-cocomplete category (where $\lambda$ is a regular cardinal). Then $\Ind_\lambda(\ca)$ is
complete if and only if $\ca$ is approximately complete.
\end{theo}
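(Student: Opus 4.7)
The plan is to relate cones in $\ca$ and cones in $\Ind_\lambda(\ca)$ via the Yoneda embedding $Y\colon\ca\to\Ind_\lambda(\ca)$, using that representables are $\lambda$-presentable in $\Ind_\lambda(\ca)$ and every object of $\Ind_\lambda(\ca)$ is a small $\lambda$-filtered colimit of representables. For the $(\Rightarrow)$ direction, assume $\Ind_\lambda(\ca)$ is complete and take a small diagram $D\colon\cd\to\ca$. Form $L=\lim YD$ in $\Ind_\lambda(\ca)$ and present it as $L=\colim_{s\in\cs}YW_s$ for a small $\lambda$-filtered $W\colon\cs\to\ca$. Composing $YW_s\to L$ with the limit cone $L\to YD$ and invoking Yoneda's full faithfulness produces cones $\omega^s\colon W_s\to D$ in $\ca$. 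For any cone $\eta\colon K\to D$ in $\ca$, $Y\eta$ factors uniquely through $L$ as some $\widetilde\eta\colon YK\to L$; the $\lambda$-presentability of $YK$ combined with the $\lambda$-filteredness of the colimit forces $\widetilde\eta$ through some $YW_s$, and Yoneda converts the result into $h\colon K\to W_s$ with $\omega^s\circ h=\eta$. This exhibits $\{(W_s,\omega^s)\}_{s\in\cs}$ as a weakly initial set in the cone category of $D$, so $\ca$ is approximately complete.

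For the $(\Leftarrow)$ direction, assume $\ca$ is approximately complete and take a small diagram $F\colon\cj\to\Ind_\lambda(\ca)$. Define $L\colon\ca^{\op}\to\Set$ pointwise by $L(A)=\lim_j F(j)(A)$. Each $F(j)$ is a $\lambda$-continuous presheaf (being a $\lambda$-filtered colimit of representables, as $\lambda$-filtered colimits commute with $\lambda$-small limits in $\Set$), hence so is $L$; and a Fubini-type commutation of limits with limits shows that once $L\in\Ind_\lambda(\ca)$, it represents $X\mapsto\lim_j\Hom_{\Ind_\lambda(\ca)}(X,F(j))$ and so equals $\lim F$. The crux is thus to exhibit $L$ as a small $\lambda$-filtered colimit of representables. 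Objects of $\int L$ are cones $(A,\zeta\colon YA\to F)$ from representables; by $\lambda$-presentability of $YA$ applied to each presentation $F(j)=\colim_d YE_j(d)$, each component $\zeta_j$ factors through some $YE_j(d_j)$ via $a_j\colon A\to E_j(d_j)$. For every compatible tuple $(d_j)_{j\in\cj}$ (compatibility enforced by the transitions $F(u)$ for $u\colon j\to j'$), such a cone descends to a cone in $\ca$ over the diagram $D_{(d_j)}\colon\cj\to\ca$, $j\mapsto E_j(d_j)$. Since $\cj$ and each $\cd_j$ are small there are only set-many compatible tuples; collecting the weakly initial sets of cones over the various $D_{(d_j)}$ (provided by approximate completeness of $\ca$) and translating them back yields a small cofinal family in $\int L$, and closing under $\lambda$-small colimits in $\ca$ (available by $\lambda$-cocompleteness) upgrades it to a small $\lambda$-filtered cofinal subcategory, so $L\in\Ind_\lambda(\ca)$.

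The main obstacle is arranging strict compatibility of the representative choices $(d_j,a_j)$ simultaneously along all transitions $F(u)$ in $\cj$. Pointwise factorization through representatives is immediate from $\lambda$-presentability, but for a single cone $\zeta\colon YA\to F$ these choices must assemble into a genuine $\ca$-cone over some $D_{(d_j)}$, which requires a saturation procedure enlarging each $d_j$ to absorb the compatibilities imposed by other indices. For $\cj$ of cardinality less than $\lambda$ the commutation of $\lambda$-small limits with $\lambda$-filtered colimits in $\Set$ makes this immediate; for arbitrary small $\cj$ a transfinite saturation is needed, and verifying that it terminates and produces data translating faithfully back to cones in $\int L$ is the delicate technical point distinguishing this argument from the analogous one for accessible $\ca$.
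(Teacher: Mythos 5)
Your $(\Rightarrow)$ direction is correct and is essentially the paper's own argument: present the limit of $YD$ in $\Ind_\lambda(\ca)$ as a small $\lambda$-filtered colimit of representables, use $\lambda$-presentability of $YK$ to push the induced morphism into the limit through some $YW_s$, and read off a weakly initial set of cones via full faithfulness of $Y$.

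The $(\Leftarrow)$ direction has a genuine gap, and you have in fact located it yourself without closing it. Everything hinges on converting an element of $L(A)=\lim_j F(j)(A)=\lim_j\colim_{d\in\cd_j}\hom(A,E_j(d))$ into an honest cone $A\to D_{(d_j)}$ in $\ca$ over a diagram of chosen representatives. That is an interchange of a $\cj$-indexed limit with $\lambda$-filtered colimits, and it is exactly what fails once $\cj$ has $\lambda$ or more morphisms: correcting the compatibility along one arrow $u$ of $\cj$ by moving to a larger index in $\cd_{j'}$ can destroy the compatibility already arranged along other arrows, and a $\lambda$-filtered category need not contain an upper bound for $\lambda$-many such corrections, so the proposed ``transfinite saturation'' has no reason to terminate. (Even defining $D_{(d_j)}$ as a functor $\cj\to\ca$ is already problematic: the factorizations of $YE_j(d_j)\to F(j)\to F(j')$ through the chosen $YE_{j'}(d_{j'})$ need neither exist nor compose.) Since this is precisely the step where $L\in\Ind_\lambda(\ca)$ would be established, the proof is incomplete at its crux. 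The paper sidesteps the issue entirely: $\lambda$-cocompleteness of $\ca$ makes $\Ind_\lambda(\ca)$ cocomplete, so the universal property of $\cp(\ca)$ as free completion under small colimits yields a colimit-preserving $F\colon\cp(\ca)\to\Ind_\lambda(\ca)$ with $FY\cong Y$, which is then left adjoint to the inclusion; hence $\Ind_\lambda(\ca)$ is reflective in $\cp(\ca)$, therefore closed under all limits that $\cp(\ca)$ has, and $\cp(\ca)$ is complete precisely when $\ca$ is approximately complete by the cited results of Freyd and Rosick\'y. To salvage a direct construction along your lines you would essentially have to reprove that reflectivity (or the Freyd--Rosick\'y theorem) by hand, rather than assemble the $\lambda$-filtered presentation of $L$ from pointwise representatives.
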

\begin{proof}
Following \cite{F} and \cite{R}, $\cp(\ca)$ is complete if and only if $\ca$ is approximately complete. Since
$\ca$ is $\lambda$-cocomplete, $\Ind_\lambda(\ca)$ is cocomplete (see \ref{re2.3} (2)). Since $\cp(\ca)$ is a free completion of $\ca$
under colimits, there is a colimit preserving functor $F\colon\cp(\ca)\to\Ind_\lambda(\ca)$ such that the composition
$$
\ca \xrightarrow{\quad  Y\quad} \cp(\ca)
             \xrightarrow{\quad F\quad} \Ind_\lambda(\ca)
$$
is naturally isomorphic to the Yoneda embedding. Consequently, $F$ is left adjoint to the inclusion of $\Ind_\lambda(\ca)$ 
into $\cp(\ca)$ (see the proof of \cite{AR}, 1.45). Thus $\Ind_\lambda(\ca)$ is a reflective full subcategory of $\cp(\ca)$.
Hence $\Ind_\lambda(\ca)$ is complete whenever $\ca$ is approximately complete.

Conversely, let $\Ind_\lambda(\ca)$ be complete and consider a diagram $D\colon\cd\to\ca$. We express its limit
$K$ in $\Ind_\lambda(\ca)$ as a filtered colimit $(k_e\colon A_e\to K)_{e\in\ce}$ of objects from $\ca$. Now, each cone 
$A\to D$ with $A\in\ca$, uniquely factorizes through the limit cone via the morphism $t\colon A\to K$. Since $t$ factorizes 
through some $k_e$, the cones $A_e\to D$ obtained by precomposing the limit cone with $k_e$,
$e\in\ce$ form a weakly initial set of cones over $D$. Thus $\ca$ is approximately complete.
\end{proof}

\begin{rem}\label{re2.5}
{\em
Following \ref{th2.4}, a class-$\lambda$-accessible category $\ck$ is class-locally $\lambda$-presentable
if and only if $\pres_\lambda\ck$ is $\lambda$-cocomplete and approximately complete. Following \cite{F} 
and \cite{R}, $\cp(\ca)$ is class-locally finitely-presentable iff $\ca$ is approximately complete.
}
\end{rem}

\begin{theo}\label{th2.6}
Let $\ck$ be a category and $\lambda$ a regular cardinal. Then $\ck$ is class-locally $\lambda$-presentable
if and only if it is equivalent to a full, reflective subcategory of $\cp(\ca)$ closed under $\lambda$-filtered
colimits for some approximately complete category $\ca$.
\end{theo}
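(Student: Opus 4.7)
The plan is to prove the two implications separately, leaning on \ref{th2.4}, \ref{re2.5}, and \ref{ex2.2}(2), (4).

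For necessity, I would assume that $\ck$ is class-locally $\lambda$-presentable and set $\ca = \pres_\lambda\ck$. By \ref{ex2.2}(4) we have $\ck \simeq \Ind_\lambda(\ca)$, and by \ref{re2.5} the category $\ca$ is both $\lambda$-cocomplete and approximately complete. The argument in the proof of \ref{th2.4} then exhibits $\Ind_\lambda(\ca)$ as a full reflective subcategory of $\cp(\ca)$ via the colimit-preserving extension $F$ of the Yoneda embedding. Closure of $\Ind_\lambda(\ca)$ under small $\lambda$-filtered colimits in $\cp(\ca)$ follows from associativity of colimits: a small $\lambda$-filtered colimit of small $\lambda$-filtered colimits of representables is, via the Grothendieck construction on the indexing data, still a small $\lambda$-filtered colimit of representables, and hence again lies in $\Ind_\lambda(\ca)$.

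For sufficiency, I would denote the inclusion by $i\colon\ck\hookrightarrow\cp(\ca)$ and its left adjoint by $R$. Since $\ca$ is approximately complete, the theorem of \cite{F} and \cite{R} cited in the proof of \ref{th2.4} gives that $\cp(\ca)$ is complete; the reflective subcategory $\ck$ therefore inherits completeness, and it is cocomplete because $\cp(\ca)$ is and the reflector produces colimits in $\ck$. Take $\cb$ to be the class of objects $R(P)$, where $P$ ranges over $\lambda$-presentable objects of $\cp(\ca)$ (equivalently, $\lambda$-small colimits of representables). Because $\ck$ is closed under $\lambda$-filtered colimits in $\cp(\ca)$, such colimits are computed identically in both categories, and for any small $\lambda$-filtered diagram $(X_d)$ in $\ck$ the adjunction gives
$$
\hom(R(P),\colim X_d)\cong\hom(P,\colim iX_d)\cong\colim\hom(P,iX_d)\cong\colim\hom(R(P),X_d),
$$
so each $R(P)$ is $\lambda$-presentable in $\ck$. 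Finally, given $K\in\ck$, the object $i(K)\in\cp(\ca)$ can be written as a small $\lambda$-filtered colimit of $\lambda$-presentable presheaves (using the decomposition of an arbitrary colimit into a $\lambda$-filtered colimit of $\lambda$-small colimits recalled in \ref{ex2.2}(2)); applying $R$, which preserves colimits, and using $K\cong R(i(K))$ expresses $K$ as a small $\lambda$-filtered colimit of objects from $\cb$.

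The main obstacle I expect is set-theoretic bookkeeping: verifying that the indexing data for the colimits involved remain small, both when establishing closure of $\Ind_\lambda(\ca)$ under small $\lambda$-filtered colimits and when decomposing an object of $\cp(\ca)$ as a small $\lambda$-filtered colimit of $\lambda$-presentable objects. Once smallness is secured, the presentability and reflectivity arguments are direct consequences of the adjunction $R\dashv i$ and closure of $\ck$ under $\lambda$-filtered colimits.
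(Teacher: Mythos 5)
Your proof is correct and follows essentially the same route as the paper: in both directions the key tool is the colimit-preserving extension of the Yoneda embedding (the paper's canonical functor $E$ together with its left adjoint $F$), the fact that the reflector preserves $\lambda$-presentable objects because its right adjoint preserves $\lambda$-filtered colimits, and the decomposition of an arbitrary small presheaf as a $\lambda$-filtered colimit of $\lambda$-presentable ones. Your routing of the necessity direction through $\Ind_\lambda(\pres_\lambda\ck)$ and Remark \ref{re2.5} is only a cosmetic repackaging of the paper's argument via $E$, and the set-theoretic worry you raise is harmless since small presheaves are by definition small colimits of representables.
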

\begin{proof}
Given a class-locally $\lambda$-presentable category $\ck$, we put $\ca=\pres_\lambda\ck$ and define 
the \textit{canonical functor}
$$
E:\ck\to\cp(\ca)
$$
by taking $E(K):\ca^{\op}\to\Set$ to be the restriction of $\hom(-,K)$ on $\ca$. Since
$$
E(\colim D)\cong\colim ED
$$
for each $\lambda$-filtered diagram $D:\cd\to\ck$, each $E(K)$ is a $\lambda$-filtered colimit of hom-functors
and thus belongs to $\cp(\ca)$. Moreover, the functor $E$ preserves $\lambda$-filtered colimits. Since $\cp(\ca)$ 
is a free completion of $\ca$ under colimits, there is a colimit preserving functor $F:\cp(\ca)\to\ck$ such that 
the composition
$$
\ck \xrightarrow{\quad  E\quad} \cp(\ca)
             \xrightarrow{\quad F\quad} \ck
$$
is naturally isomorphic to $\Id_\ck$. Moreover, $F$ is left adjoint to the inclusion of $\ck$ into $\cp(\ca)$. Thus 
$\ck$ is a reflective full subcategory of $\cp(\ca)$. Finally, following the proof of \ref{th2.4}, $\ca$
is approximately complete.

Conversely, let $\ca$ be an approximately complete category and $\ck$ a full reflective subcategory of $\cp(\ca)$
closed under $\lambda$-filtered colimits. Since the reflection $F:\cp(\ca)\to\ck$ is left adjoint to the inclusion
of $\ck$ in $\cp(\ca)$ which preserves $\lambda$-filtered colimits, $F$ preserves $\lambda$-presentable objects.
Since each object of $\cp(\ca)$ is a $\lambda$-filtered colimit of $\lambda$-presentable objects, $\ck$ has the same 
property. Thus $\ck$ is class-locally $\lambda$-presentable.  
\end{proof}

\begin{defi}\label{def2.7}
{
\em
A functor $F:\ck\to\cl$ is called \textit{class-$\lambda$-accessible} (where $\lambda$ is a regular cardinal)
if $\ck$ and $\cl$ are class-$\lambda$-accessible categories and $F$ preserves $\lambda$-filtered colimits.
A class-$\lambda$-accessible functor preserving $\lambda$-presentable objects is called \textit{strongly
class-$\lambda$-accessible}. 

$F$ is called \textit{(strongly) class-accessible} if it is (strongly) class-$\lambda$-acce\-ssi\-ble 
for some regular cardinal $\lambda$.
}
\end{defi}

The uniformization theorem \cite{MP}, 2.4.9 (see \cite{AR}, 2.19) implies that each accessible functor is strongly accessible.
Among others, this uses the fact that, given a set $\ca$ of objects of an accessible category $\ck$, there is a regular cardinal 
$\lambda$ such that each $A\in\ca$ is $\lambda$-presentable. This does not generalize to class-accessible case and we have to
distinguish between class-accessible and strongly class-accessible functors here. For example, the large discrete category $\cd$
is class-$\omega$-accessible and any functor from $\cd$ into a class-$\omega$-accessible category is class-$\omega$-accessible
but not always strongly class-$\omega$-accessible. 

\begin{rem}\label{re2.8}
{
\em
In the same way as for accessible categories, one can replace $\lambda$-filtered colimits in \ref{def2.1} by
$\lambda$-directed ones. Moreover, one can show that, given a regular cardinals $\lambda\vartriangleleft\mu$, 
each class-$\lambda$-accessible category $\ck$ is class-$\mu$-accessible. The argument (see \cite{AR}, 2.11) goes 
as follows. One writes $K\in\ck$ as a $\lambda$-directed colimit $(a_i:A_i\to K)_{i\in I}$ of $\lambda$-presentable
objects. Let $\hat{I}$ be the poset of all $\lambda$-directed subsets of $I$ of cardinality less then $\mu$
(ordered by inclusion). Due to $\lambda\vartriangleleft\mu$, $\hat{I}$ is $\mu$-directed. Colimits of $(A_i)_{i\in M}$,
$M\in\hat{I}$ are $\mu$-presentable and $K$ is their $\mu$-directed colimit.

If $K$ is $\mu$-presentable then $K$ is a retract of some $\lambda$-directed colimit of $(A_i)_{i\in M}$. Consequently,
a strongly class-$\lambda$-accessible functor $F:\ck\to\cl$ is strongly class-$\mu$-accessible.

Recall that the successor $\lambda^+$ of each cardinal $\lambda$ is always regular and $\lambda\vartriangleleft\lambda^+$.
}
\end{rem}

\section{Limits of class-accessible categories}

The fundamental discovery of \cite{MP} is that accessible categories are closed under constructions of limit type.
In particular, they are closed under pseudopullbacks. The distinction between pullbacks and pseudopullbacks
is that the latter use isomorphisms instead of identities (see the proof of \ref{prop3.1} below). We are
going to show that this generalizes to class-accessible categories. For a pseudopullback 
$$
\xymatrix@=4pc{
\cp \ar [r]^{\bar{F}} \ar [d]_{\bar{G}}& \cl \ar [d]^{G}\\
\ck\ar [r]_{F}& \cm
}
$$
we will use the notation $\cp=\PsPb(F,G)$.

\begin{propo}\label{prop3.1}
Let $\lambda$ be a regular cardinal and $F:\ck\to\cm$ and $G:\cl\to\cm$ strongly class-$\lambda$-accessible functors. 
Then their pseudopullback $\PsPb(F,G)$ is a class-$\lambda^+$-accessible category and $\bar{F},\bar{G}$ are strongly
class-$\lambda^+$-accessible functors.
\end{propo}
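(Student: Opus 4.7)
The plan is to follow the classical Makkai--Par\'e pseudopullback argument (cf.\ \cite{MP} or \cite{AR}, Exercise~2.c) and verify that each step carries over to the class-accessible setting, with the jump from $\lambda$ to $\lambda^+$ arising exactly as in the small case.

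First I would verify that $\cp$ has $\lambda^+$-filtered colimits, computed componentwise from $\ck$ and $\cl$: given a $\lambda^+$-filtered diagram $D\colon\cd\to\cp$ with $D(d)=(K_d,L_d,\phi_d)$, put $K=\colim K_d$ and $L=\colim L_d$, and use that $F,G$ preserve $\lambda$-filtered (hence $\lambda^+$-filtered) colimits together with the fact that a $\lambda^+$-filtered colimit of isomorphisms in $\cm$ is an isomorphism in order to assemble the $\phi_d$ into an iso $\phi\colon FK\to GL$. In particular $\bar F$ and $\bar G$ preserve $\lambda^+$-filtered colimits.

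Next, let $\cx\subseteq\cp$ be the class of triples $(K,L,\phi)$ with $K\in\pres_\lambda\ck$ and $L\in\pres_\lambda\cl$. I would check that each member of $\cx$ is $\lambda^+$-presentable in $\cp$. By the componentwise description above, the hom-set out of $(K,L,\phi)$ into a $\lambda^+$-filtered colimit in $\cp$ is a pullback in $\Set$ of the $\lambda^+$-filtered colimits of the hom-sets in $\ck$ and $\cl$ over the one in $\cm$; here one uses strong class-$\lambda$-accessibility of $F,G$ to know $FK,GL\in\pres_\lambda\cm$. Finite limits commuting with filtered colimits in $\Set$ then gives the required preservation.

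The main step is to exhibit every $(K,L,\phi)\in\cp$ as a $\lambda^+$-filtered colimit of objects of $\cx$. Using Remark~\ref{re2.3}(4), I pick small full subcategories $\ca\subseteq\pres_\lambda\ck$ and $\cb\subseteq\pres_\lambda\cl$ so that $K=\colim_{i\in I}K_i$ and $L=\colim_{j\in J}L_j$ are canonical $\lambda$-filtered colimits over $\ca,\cb$; strong accessibility of $F,G$ yields canonical $\lambda$-filtered colimit expressions $FK=\colim FK_i$ and $GL=\colim GL_j$ in $\cm$, with all $FK_i,GL_j$ in $\pres_\lambda\cm$. Since each $FK_i$ is $\lambda$-presentable, $\phi\circ Ff_i$ factors through some $GL_{j(i)}$, and dually $\phi^{-1}\circ Gg_j$ factors through some $FK_{i(j)}$. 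Let $\hat{M}$ be the poset of pairs $(I',J')$ of $\lambda$-directed subsets of $I,J$ of cardinality at most $\lambda$, closed under these two factorization choices and such that the induced comparison $F(\colim_{I'}K_i)\to G(\colim_{J'}L_j)$ is already an isomorphism. Because $\lambda\vartriangleleft\lambda^+$ (Remark~\ref{re2.8}), $\hat{M}$ is $\lambda^+$-directed; it carries a canonical diagram in $\cx$, and its componentwise colimit in $\cp$ is $(K,L,\phi)$. Finally, since $\bar F$ and $\bar G$ send objects of $\cx$ to $\lambda$-presentable (hence $\lambda^+$-presentable) objects of $\ck$ and $\cl$, they are strongly class-$\lambda^+$-accessible.

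The main obstacle is the bookkeeping in the main step: one must simultaneously close $\hat{M}$ under the back-and-forth factorizations through $\phi$ and $\phi^{-1}$, ensure that the induced comparison map really becomes an iso already at each stage in $\hat{M}$ (not merely in the colimit), and verify $\lambda^+$-directedness. This is where the passage from $\lambda$ to $\lambda^+$ is forced, for precisely the same reason as in the small-accessible case; the passage to classes is harmless once $\ca,\cb$ are chosen small by Remark~\ref{re2.3}(4).
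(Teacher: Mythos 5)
Your argument is essentially the paper's: reduce to small canonical diagrams via Remark \ref{re2.3}(4), then perform a back-and-forth closing-off to build $\lambda^+$-presentable approximants whose comparison map is already an isomorphism, with the jump to $\lambda^+$ forced for the same reason (the paper organizes the closing-off as explicit transfinite chains of length $\lambda$ rather than your poset $\hat{M}$, and writes pseudopullback objects as $5$-tuples $(K,L,M,f,g)$ rather than triples, but these are cosmetic differences). One bookkeeping slip: the objects produced by $\hat{M}$ have components $\colim_{I'}K_i$ with $|I'|\leq\lambda$, which are only $\lambda^+$-presentable rather than $\lambda$-presentable, so they do not lie in your class $\cx$ as you defined it; you should instead take $\cx$ to consist of triples with $\lambda^+$-presentable components (after invoking Remark \ref{re2.8} to make $F$ and $G$ strongly class-$\lambda^+$-accessible), exactly as the paper does with its small subcategories of $\pres_{\lambda^+}$ closed under $\lambda^+$-small $\lambda$-filtered colimits.
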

\begin{proof}
Let $F$ and $G$ be strongly class-$\lambda$-accessible. Objects of their pseudo\-pull\-back are 5-tuples $(K,L,M,f,g)$ where $K\in\ck$,
$L\in\cl$, $M\in\cm$ and $f:FK\to M$, $g:GL\to M$ are isomorphisms (morphisms are obvious). Since both $F$ and $G$ preserve 
$\lambda$-filtered colimits, $\PsPb(F,G)$ has $\lambda$-filtered colimits and $\bar{F}$ and $\bar{G}$ preserve them. It remains to show that 
each object $(K,L,M,f,g)$ from $\PsPb(F,G)$ is a $\lambda^+$-filtered colimit of objects from $\PsPb(F,G)$ which are $\lambda^+$-presentable 
in $\ck\times\cl$. Following \ref{re2.8}, $\ck,\cl$ and $\cm$ are class-$\lambda^+$-accessible and $F$ and $G$ are strongly 
class-$\lambda^+$-accessible. 

Following \ref{re2.3} (4), there is a small full subcategory $\ca_1$ of $\pres_{\lambda^+}\ck$, a small full subcategory
$\ca_2$ of $\pres_{\lambda^+}\cl$ and a small full subcategory $\ca_3$ of $\pres_{\lambda^+}\cm$, such that $K$ is a canonical 
$\lambda^+$-filtered colimit of $\ca_1$-objects, $L$ is a canonical $\lambda^+$-filtered colimit of $\ca_2$-objects and $M$ is 
a canonical $\lambda^+$-filtered colimit of $\ca_3$-objects. We can also assume that $\ca_1$, $\ca_2$ and $\ca_3$ are closed under
$\lambda^+$-small $\lambda$-filtered colimits. We will denote the canonical diagrams as $C:\cc\to\ck$, $D:\cd\to\cl$ and $E:\ce\to\cm$
and their canonical colimits as $(k_c:Cc\to K)_{c\in\cc}$, $(l_d:Dd\to L)_{d\in\cd}$ and $(m_e:Ee\to M)_{e\in\ce}$ . Let $c_0\in\cc$,
$d_0\in\cd$ and $e_0\in\ce$. Since $FCc_0$ and $GDd_0$ are $\lambda^+$-presentable, there is $m_{01}:e_0\to e_1$ in $\ce$,
$f_0:FCc_0\to Ee_1$ and $g_0:GDd_0\to Ee_1$ such that $fF(k_{c_0})=m_{e_1}f_0$ and $gG(l_{d_0})=m_{e_1}g_0$. Analogously, there is 
$k_{01}:c_0\to c_1$ in $\cc$, $l_{01}:d_0\to d_1$ in $\cd$ and morphisms $f'_1:Ee_1\to FCc_1$, $g'_1:Ee_1\to GDd_1$ such that 
$f^{-1}m_{e_1}=F(k_{c_1})f'_1$, $f'_1f_0=FC(k_{01})$, $g^{-1}m_{e_1}=G(l_{d_1})g'_1$ and $g'_1g_0=GD(l_{01})$. There is 
$m_{12}:e_1\to e_2$ in $\ce$, $f_1:FCc_1\to Ee_2$ and $g_1:GDd_1\to Ee_2$ such that $fF(k_{c_1})=m_{e_2}f_1$, $f_1f'_1=E(m_{12})$,
$gG(l_{d_1})=m_{e_2}g_1$ and $g_1g'_1=E(m_{12})$. Continuing this procedure, we get chains $(k_{ij}:c_i\to c_j)_{i<j<\lambda}$, 
$(l_{ij}:d_i\to d_j)_{i<j<\lambda}$, $(m_{ij}:e_i\to e_j)_{i<j<\lambda}$ and morphisms $f_i:FCc_i\to Ee_{i+1}$, $g_i:GDd_i\to Ee_{i+1}$,
$f'_i:Ee_i\to FDd_i$, $g'_i:Ee_i\to GDd_i$ such that $f^{-1}m_{e_i}=F(k_{c_i})f'_i$, $f'_jf_i=FC(k_{ij})$, $g^{-1}m_{e_i}=G(l_{d_i})g'_i$ 
and $g'_jg_i=GD(l_{ij})$ for each $0<i<j<\lambda$. In limit steps, we take upper bounds (using the fact that $\cc$, $\cd$ and $\ce$ 
are $\lambda$-filtered). Let $K_\lambda=\colim Cc_i$, $L_\lambda=\colim Dd_i$ and $M_\lambda=\colim Ee_i$ where $i<\lambda$. We get morphisms 
$f_\lambda=\colim f_i:FK_\lambda\to M_\lambda$, $f'_\lambda=\colim f'_i:M_\lambda\to FK_\lambda$, $g_\lambda=\colim g_i:GL_\lambda\to M_\lambda$ 
and $g'_\lambda=\colim g'_i:M_\lambda\to GL_\lambda$ such that $f'_\lambda=(f_\lambda)^{-1}$ and $g'_\lambda=(g_\lambda)^{-1}$. Thus 
$(K_\lambda,L_\lambda,M_\lambda,f_\lambda,g_\lambda)$ belongs to $\PsPb(F,G)$. Since $K_\lambda$ belongs 
to $\ca_1$, $L_\lambda$ belongs to $\ca_2$ and $M_\lambda$ belongs to $\ca_3$, we have found a factorization of 
$$
(k_{c_0},l_{d_0}): (Cc_0,Dd_0)\to (K,L)
$$
through $(K_\lambda,L_\lambda,M_\lambda,f_\lambda,g_\lambda)\to(K,L,M,f,g)$. The consequence is that $(K,L,M,f,g)$ is a $\lambda^+$-filtered 
colimit of objects $(K_\lambda,L_\lambda,M_\lambda,f_\lambda,g_\lambda)$ which are $\lambda^+$-presentable in $\PsPb(F,G)$. Thus $\PsPb(F,G)$ is  
class-$\lambda^+$-accessible and $\bar{F}$ and $\bar{G}$ are strongly class-$\lambda^+$-accessible.
\end{proof}

\begin{rem}\label{re3.2}
{
\em
A functor $G:\ck\to\cl$ is \textit{transportable} if for every ob\-ject $L$ in $\cl$ and an isomorphism $g:GK\to L$
there exists a unique isomorphism $f:K\to K'$ such that $GK'=L$ and $Gf=g$.

If one of functors $F$ and $G$ in \ref{prop3.1} is transportable then, following \cite[5.1.1]{MP} , $\PsPb(F,G)$ is equivalent to the pullback
of $F$ and $G$.
}
\end{rem}
 
We follow the terminology of \cite{AR} and call a full subcategory $\cl$ of a category $\ck$ \textit{accessibly
embedded} if there is a regular cardinal $\lambda$ such that $\cl$ is closed under $\lambda$-filtered colimits
in $\ck$. Moreover, we say that $\cl$ is \textit{strongly accessibly embedded} if there is a regular cardinal $\lambda$
such that $\cl$ is closed under $\lambda$-filtered colimits in $\ck$ and the inclusion of $\cl$ to $\ck$ preserves 
$\lambda$-presentable objects.

\begin{coro}\label{cor3.3}
Let $\ck$ be a class-accessible category. An intersection of a set of class-accessible strongly accessibly embedded
subcategories of $\ck$ is a class-accessible strongly accessibly embedded subcategory of $\ck$.
\end{coro}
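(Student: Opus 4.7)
The plan is to deduce the corollary from the chain-construction of Proposition~\ref{prop3.1} by first uniformizing the accessibility data across $I$ and then adapting the argument to a wide (small-indexed) pseudopullback.

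Since $\{\cl_i\}_{i\in I}$ is indexed by a set, Remark~\ref{re2.8} allows me to choose a single regular cardinal $\lambda$ with $|I|<\lambda$ such that $\ck$ and every $\cl_i$ is class-$\lambda$-accessible and every inclusion $J_i\colon\cl_i\hookrightarrow\ck$ is strongly class-$\lambda$-accessible. Set $\cm=\bigcap_{i\in I}\cl_i$, a full, automatically replete, subcategory of $\ck$. Closure of every $\cl_i$ under $\lambda$-filtered colimits in $\ck$ immediately gives the same for $\cm$, so the inclusion $\cm\hookrightarrow\ck$ preserves $\lambda^+$-filtered colimits. Each $J_i$ is transportable (being a full replete inclusion), so an argument analogous to Remark~\ref{re3.2} identifies $\cm$ up to equivalence with the wide pseudopullback of $\{J_i\}_{i\in I}$ over $\ck$.

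The heart of the argument is a wide-pseudopullback variant of Proposition~\ref{prop3.1}: for every $K\in\cm$ and every morphism $a\colon A\to K$ with $A\in\pres_\lambda\ck$, I would produce a factorization $A\to A^*\to K$ with $A^*\in\cm\cap\pres_{\lambda^+}\ck$. For each $i$, Remark~\ref{re2.3}(4) yields a small $\ca_i\subseteq\pres_\lambda\cl_i\subseteq\pres_\lambda\ck$ (the latter inclusion by strong embedding of $\cl_i$) whose canonical diagram $\ca_i\downarrow K\to\ck$ is $\lambda$-filtered with colimit $K$. Starting from $A_0=A$, I would build a chain $(A_\alpha)_{\alpha<\lambda}$ in $\pres_\lambda\ck$ together with chains $(B_\alpha^{(i)})_{\alpha<\lambda}$ in $\ca_i$ for each $i\in I$ and correcting morphisms linking them, running the back-and-forth of Proposition~\ref{prop3.1} simultaneously over all $i$. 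At successor stages one factors the current maps through the canonical $\ca_i$-diagrams using $\lambda$-presentability, and at every stage one uses $\lambda$-filteredness of each $\ca_i\downarrow K$ together with $|I|<\lambda$ to absorb the $|I|$-many required factorizations into a single ordinal step. The colimit $A^*=\colim_{\alpha<\lambda}A_\alpha$ is $\lambda^+$-presentable in $\ck$ and, by the corrections, is canonically isomorphic to $\colim_\alpha B_\alpha^{(i)}$ for every $i$; the latter is a $\lambda$-filtered colimit of $\ca_i$-objects inside $\cl_i$, which is closed under $\lambda$-filtered colimits in $\ck$, so $A^*\in\cl_i$ for every $i$ and hence $A^*\in\cm$.

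Standard cofinality arguments, starting from the canonical $\lambda$-filtered diagram of $K$ in $\ck$ and applying the factorization above to each $\lambda$-presentable approximation, then express $K$ as a $\lambda^+$-filtered colimit in $\cm$ of objects of $\cm\cap\pres_{\lambda^+}\ck$. Each such object is $\lambda^+$-presentable in $\cm$ because $\cm$ is closed under $\lambda^+$-filtered colimits in $\ck$; conversely a retract argument shows any $\lambda^+$-presentable object of $\cm$ is $\lambda^+$-presentable in $\ck$, so the inclusion $\cm\hookrightarrow\ck$ is strongly class-$\lambda^+$-accessible. I expect the main obstacle to be the simultaneous bookkeeping of the back-and-forth across all $i\in I$ within a single chain of length $\lambda$; this is precisely what forces $|I|<\lambda$ and turns the binary argument of Proposition~\ref{prop3.1} into a genuinely wide-pseudopullback construction, rather than a finite or transfinite iteration of the binary case.
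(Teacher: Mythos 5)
Your argument is correct in substance but follows a genuinely different route from the paper's. The paper treats \ref{prop3.1} as a black box and performs a purely formal reduction: the intersection is the multiple pullback of the inclusions $\cl_i\to\ck$; after uniformizing the index of accessibility exactly as you do (via \ref{re2.8} and \cite[2.13(6)]{AR}), this multiple pullback is rewritten as an equalizer of two strongly class-$\lambda$-accessible functors between products of class-accessible categories, the equalizer is replaced by a pullback, and that pullback is identified with the pseudopullback using transportability of the inclusions (\cite[5.1.1]{MP}, cf.\ \ref{re3.2}); a single application of the binary \ref{prop3.1} then finishes. You instead re-open the proof of \ref{prop3.1} and run its transfinite back-and-forth simultaneously over all $i\in I$, which amounts to proving a wide-pseudopullback strengthening of that proposition. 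Each approach buys something: the paper's is shorter modulo the routine (but unproved there) fact that a set-indexed product of class-$\lambda$-accessible categories is class-$\lambda$-accessible --- a claim that itself secretly needs $|I|<\lambda$ so that tuples of $\lambda$-presentable objects are $\lambda$-presentable in the product; your construction makes that cardinal condition explicit and avoids the detour through products, at the price of heavier bookkeeping. Two points where your sketch should be tightened to match the care taken in \ref{prop3.1}: the chains ought to be built inside canonical diagrams of $\lambda^+$-presentable objects drawn from small subcategories closed under $\lambda^+$-small $\lambda$-filtered colimits (as in the proof of \ref{prop3.1}), since otherwise the concluding ``standard cofinality argument'' that the objects $A^*$ form a $\lambda^+$-filtered diagram over $K$ does not quite come for free from objects built out of $\pres_\lambda\ck$ alone; and the passage from ``$A^*$ is isomorphic over $K$ to an object of $\cl_i$'' to ``$A^*\in\cl_i$'' uses repleteness of the $\cl_i$ (equivalently, transportability of the inclusions), which is precisely the issue the paper sidesteps by phrasing everything in terms of pseudopullbacks.
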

\begin{proof}
Let $\cl_i$, $i\in I$ be a set of class-accessible strongly accessibly embedded subcategories of $\ck$ and $\cl$ be
their intersection. Then $\cl$ is a multiple pullback of embeddings $\cl_i\to\ck$. Following \ref{re2.8} and \cite{AR}, 2.13 (6),
there is a regular cardinal $\lambda$ such that each inclusion $\cl_i\to\ck$ is strongly class-$\lambda$-accessible. 
In the usual way, this multiple pullback can be expressed as an equalizer of two functors between products. A product 
of class-accessible categories is class accessible and both functors are strongly class-$\lambda$-accessible.
The equalizer can be replaced by a pullback of strongly class-$\lambda$-accessible functors. Since this pullback is equivalent 
to their pseudopullback (cf. \cite{MP} 5.1.1), it follows from \ref{prop3.1} that $\cl$ is class-accessible strongly class-accessibly 
embedded subcategory of $\ck$.  
\end{proof}

\begin{rem}\label{re3.4}
{\em
(1) Given a class-accessible functor $F:\cl\to\ck$, the full subcategory of $\ck$ consisting of objects $F(L)$,
$L\in\cl$ is called the \textit{full image} of $F$.
 
(2) It is easy to see that, if a full image of a strongly class-accessible functor $\cl\to\ck$ is closed in $\ck$ 
under $\lambda$-filtered colimits for some regular cardinal $\lambda$, then it is class-accessible.
}
\end{rem}

\begin{coro}\label{cor3.5}
Let $F_1:\cl_1\to\ck$ and $F_2:\cl_2\to\ck$ be strongly class-accessible functors. Then the pseudopullback $\cp$ 
of their full images $F_1(\cl_1)\to\ck$ and $F_2(\cl_2)\to\ck$ is the full image of a strongly class-accessible functor
$\cm\to\ck$.
\end{coro}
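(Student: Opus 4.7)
The plan is to realize $\cp$ as the full image of a functor $H\colon\cm\to\ck$ where $\cm$ is the pseudopullback $\PsPb(F_1,F_2)$ of the strongly class-accessible functors $F_1,F_2$ themselves (not of the inclusions of their full images). By Remark~\ref{re2.8}, I may first enlarge the relevant regular cardinals until both $F_1$ and $F_2$ are strongly class-$\lambda$-accessible for one common $\lambda$. Proposition~\ref{prop3.1} then yields that $\cm:=\PsPb(F_1,F_2)$ is class-$\lambda^+$-accessible and that its two canonical projections to $\cl_1$ and $\cl_2$ are strongly class-$\lambda^+$-accessible.

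Next I would define $H\colon\cm\to\ck$ on objects by $(L_1,L_2,M,f,g)\mapsto M$ and on morphism triples $(h_1,h_2,m)$ by $m$. To check that $H$ is strongly class-$\lambda^+$-accessible I would carry out two routine verifications. First, $\lambda^+$-filtered colimits in $\cm$ are computed coordinatewise (using that $F_1,F_2$ preserve them), so the projection $H$ preserves such colimits. Second, the $\lambda^+$-presentable objects of $\cm$ produced in the proof of Proposition~\ref{prop3.1} are 5-tuples whose $\ck$-coordinate is $\lambda^+$-presentable in $\ck$; arbitrary $\lambda^+$-presentable objects of $\cm$ are retracts of these, so $H$ sends $\lambda^+$-presentable objects of $\cm$ to $\lambda^+$-presentable objects of $\ck$.

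The final step is to match the full image of $H$ with $\cp$. By definition the full image of $H$ is the full subcategory of $\ck$ on those $M$ for which there exist $L_1\in\cl_1,L_2\in\cl_2$ and isomorphisms $F_1L_1\cong M\cong F_2L_2$. On the other hand, $\cp$ has as objects 5-tuples $(A,B,M,f,g)$ with $A\in F_1(\cl_1)$, $B\in F_2(\cl_2)$ and $f\colon A\cong M$, $g\colon B\cong M$ isomorphisms in $\ck$. The projection $\cp\to\ck$, $(A,B,M,f,g)\mapsto M$, is fully faithful, because the invertibility of $f$ and $g$ forces any $\cp$-morphism to be determined by its $\ck$-component; its essential image is precisely the full image of $H$. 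Thus $\cp$ is equivalent, as a category over $\ck$, to the full image of $H$.

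I expect the main obstacle to be this last identification: the first two steps are direct applications of Proposition~\ref{prop3.1} and its proof, but the statement ``$\cp$ is the full image of a functor to $\ck$'' has to be interpreted up to equivalence, and justifying this requires the rigidification observation above---that the pseudopullback of two full inclusions into $\ck$ is equivalent, via its $\ck$-projection, to an honest full subcategory of $\ck$.
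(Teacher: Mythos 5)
Your argument is correct and takes essentially the same route as the paper, whose proof is just a pointer to Proposition~\ref{prop3.1} via \cite{R2}, 2.6: form $\cm=\PsPb(F_1,F_2)$, note it is class-accessible with strongly class-accessible projections, and take the middle projection to $\ck$. The details you supply --- choosing a common $\lambda$ via Remark~\ref{re2.8}, checking preservation of $\lambda^{+}$-presentables through retracts, and identifying $\cp$ up to equivalence with the full image via the fully faithful $\ck$-projection --- are exactly what the paper leaves implicit.
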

\begin{proof}
It follows from \ref{prop3.1} in the same way as in \cite{R2}, 2.6.
\end{proof}

Recall that, given functors $F,G:\ck\to\cl$ and natural transformations $\varphi,\psi:F\to G$, the \textit{equifier} $\Eq(\varphi,\psi)$
is the full subcategory of $\ck$ consisting of all objects $K$ such that $\varphi_K=\psi_K$.
 
\begin{propo}\label{prop3.6}
Let $F,G:\ck\to\cl$ be strongly class-accessible functors and $\varphi,\psi:F\to G$ natural transformations.
Then $\Eq(\varphi,\psi)$ is a class-accessible category strongly accessibly embedded in $\ck$.
\end{propo}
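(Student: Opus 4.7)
The plan is to realize $\Eq(\varphi,\psi)$ as a strict pullback of strongly class-accessible functors which agrees up to equivalence with a pseudopullback, and then invoke Proposition~\ref{prop3.1}. Using Remark~\ref{re2.8}, I first enlarge $\lambda$ if necessary so that both $F$ and $G$ are strongly class-$\lambda$-accessible and $\ck,\cl$ are class-$\lambda$-accessible. Let $\cj$ denote the walking parallel pair $\bullet\rightrightarrows\bullet$; an object of the diagram category $\cl^{\cj}$ is a quadruple $(X,Y,u,v)$ with $u,v\colon X\to Y$. Define
$$
H\colon\ck\to\cl^{\cj},\qquad HK=(FK,GK,\varphi_K,\psi_K),
$$
which is functorial by naturality of $\varphi$ and $\psi$, and let $\Delta\colon\cl^{\to}\to\cl^{\cj}$ be the diagonal $u\mapsto(X,Y,u,u)$. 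Direct inspection shows that the strict pullback of $H$ along $\Delta$ is precisely $\Eq(\varphi,\psi)$.

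Next I will check that $\cl^{\to}$ and $\cl^{\cj}$ are class-$\lambda$-accessible and that $H$ and $\Delta$ are strongly class-$\lambda$-accessible. Writing $\cl$ as $\Ind_\lambda(\pres_\lambda\cl)$ as in Example~\ref{ex2.2}(4), the chain decomposition of Remark~\ref{re2.3}(3) reduces any given $\cj$-diagram or arrow in $\cl$ to one lying in a small $\lambda$-accessible subcategory, where the standard accessible-category computation applies; this yields the expected descriptions of $\lambda$-filtered colimits and $\lambda$-presentable objects in $\cl^{\to}$ and $\cl^{\cj}$ (namely, those whose components are $\lambda$-presentable). From this, $\Delta$ preserves $\lambda$-filtered colimits and $\lambda$-presentable objects, and strong class-$\lambda$-accessibility of $H$ is inherited from that of $F$ and $G$.

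The last ingredient is to identify this strict pullback with $\PsPb(H,\Delta)$, which I obtain from Remark~\ref{re3.2} by verifying that $\Delta$ is transportable: any isomorphism $(\alpha,\beta)\colon(X,Y,u,u)\to(X',Y',h,k)$ in $\cl^{\cj}$ satisfies $h\alpha=\beta u=k\alpha$, forcing $h=k$, and $(\alpha,\beta)$ then lifts uniquely to an isomorphism $u\to h$ in $\cl^{\to}$. Proposition~\ref{prop3.1} now yields that $\Eq(\varphi,\psi)\simeq\PsPb(H,\Delta)$ is class-$\lambda^+$-accessible and that the projection $\bar\Delta\colon\PsPb(H,\Delta)\to\ck$, which up to this equivalence is the inclusion of $\Eq(\varphi,\psi)$ into $\ck$, is strongly class-$\lambda^+$-accessible; hence $\Eq(\varphi,\psi)$ is strongly accessibly embedded in $\ck$.

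The main obstacle I anticipate is the second paragraph: establishing class-$\lambda$-accessibility of the auxiliary functor categories $\cl^{\to}$ and $\cl^{\cj}$ together with the standard description of their $\lambda$-presentable objects. These facts are classical for ordinary accessible categories but must be redone here, since $\pres_\lambda\cl$ is in general a proper class; the key technical device is the chain-of-accessible-subcategories decomposition from Remark~\ref{re2.3}(3), which localises the question to the ordinary accessible setting. Once this is in hand, the transportability check and the application of Proposition~\ref{prop3.1} are routine.
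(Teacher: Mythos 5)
Your proposal is correct, but it takes a genuinely different route from the paper's. The paper argues directly: it first observes that $\alpha_{\colim K_d}=\colim\alpha_{K_d}$ for any natural transformation between functors preserving $\lambda$-filtered colimits (whence $\Eq(\varphi,\psi)$ is closed under $\lambda$-filtered colimits), and then, for $K\in\Eq(\varphi,\psi)$, runs a transfinite $\lambda$-chain inside the canonical $\lambda^+$-filtered diagram of $K$ --- exactly as in the proof of \ref{prop3.1} --- producing objects $K'=\colim Dd_i$ that are $\lambda^+$-presentable, lie in $\Eq(\varphi,\psi)$, and through which the canonical maps factor. You instead exhibit $\Eq(\varphi,\psi)$ as the strict pullback of $H=(F,G,\varphi,\psi)\colon\ck\to\cl^{\cj}$ along the diagonal $\Delta\colon\cl^{\to}\to\cl^{\cj}$, verify that $\Delta$ is transportable (correct: $h\alpha=\beta u=k\alpha$ with $\alpha$ invertible forces $h=k$), and invoke \ref{re3.2} and \ref{prop3.1}. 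This is the \cite{MP}-style ``PIE-limits from pseudopullbacks'' strategy; it buys uniformity, since the same device essentially also delivers \ref{prop3.7}. What it costs is precisely what you flag: one must prove that $\cl^{\to}$ and $\cl^{\cj}$ are class-$\lambda$-accessible with the componentwise-$\lambda$-presentable objects as generators, and that $H$ and $\Delta$ are strongly class-$\lambda$-accessible. Your localization via \ref{re2.3}(3) is the right mechanism (every object of $\cl^{\cj}$ lies in some $(\Ind_\lambda\ca_i)^{\cj}$, and $\lambda$-presentability of componentwise-presentable objects passes to $\cl^{\cj}$ because $\lambda$-filtered colimits there are computed componentwise and $\hom_{\cl^{\cj}}$ is a finite limit of hom-sets of $\cl$); but note that the small-category fact you then appeal to --- that $\ca^{\cj}$ is $\lambda$-accessible when $\ca$ is --- itself requires a filtered approximation/equalization argument of essentially the same combinatorial nature as the chain the paper builds directly, so the reduction relocates rather than removes that work. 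One small point to make explicit at the end: from ``the projection $\PsPb(H,\Delta)\to\ck$ is strongly class-$\lambda^+$-accessible'' you should add that, the inclusion being full and preserving $\lambda^+$-filtered colimits, the colimit of a $\lambda^+$-filtered diagram of $\Eq(\varphi,\psi)$-objects computed in $\ck$ already lies in $\Eq(\varphi,\psi)$; this closure is part of what ``strongly accessibly embedded'' requires.
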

\begin{proof}
Assume that $F$ and $G$ are strongly class-$\lambda$-accessible. The first observation is that, given a natural
transformation $\alpha:F\to G$ and a $\lambda$-filtered colimit $K=\colim K_d$, we have 
$\alpha_K=\colim\alpha_{K_d}$. As a consequence we get that $\Eq(\varphi,\psi)$ is closed in $\ck$ under
$\lambda$-filtered colimits.

We will show that each object $K\in\Eq(\varphi,\psi)$ is a $\lambda^+$-directed colimit of objects of 
$\Eq(\varphi,\psi)$ which are $\lambda^+$-presentable in $\ck$. This will yield that $\Eq(\varphi,\psi)$ is class-$\lambda^+$-accessible
and the inclusion of $\Eq(\varphi,\psi)$ into $\ck$ is strongly class-$\lambda^+$-accessible. We will proceed in a similar way as 
in the proof of \ref{prop3.1}. First, we know that $\ck$ is class-$\lambda^+$-accessible. Consider 
$K\in\Eq(\varphi,\psi)$ and take a small full subcategory $\ca$ of $\pres_{\lambda^+}\ca$ such that $K$ is a canonical
$\lambda^+$-filtered colimit of $\ca$-objects. We also assume that $\ca$ is closed in $\ck$ under $\lambda^+$-small
$\lambda$-filtered colimits. We denote the canonical diagram as $D:\cd\to\ck$ and its canonical colimit as
$(k_d:Dd\to K)_{d\in\cd}$. Let $d_0\in\cd$. Since 
$$
k_{d_0}\varphi_{Dd_0}=k_{d_0}\psi_{Dd_0},
$$
there is $k_{01}:d_0\to d_1\in\cd$ such that $D(k_{01})\varphi_{Dd_0}=D(k_{01})\psi_{Dd_0}$. Continuing this procedure, we get a chain 
$(k_{ij}:d_i\to d_j)_{i<j<\lambda}$, such that 
$$
D(k_{ij})\varphi_{Dd_i}=D(k_{ij})\psi_{Dd_i}
$$
for $i<j<\lambda$. In limit steps, we take upper bounds. Now, $K'=\colim Dd_i$, $i<\lambda$ belongs both to $\Eq(\varphi,\psi)$ 
and to $\ca$. Since we factorized $k_{d_0}$ through $K'$, $K$ is a $\lambda^+$-filtered colimit of objects from $\Eq(\varphi,\psi)$
which are $\lambda^+$-presentable in $\ck$.
\end{proof} 

Recall that, given functors $F,G:\ck\to\cl$, the \textit{inserter category} $\Ins(F,G)$ is the subcategory of the comma category $F\downarrow G$ 
(cf. \ref{re2.3} (4)) consisting of all objects $f:FK\to GK$ and all morphisms
$$
\xymatrix@=4pc{
FK \ar [r]^{f} \ar [d]_{Fk}& GK \ar [d]^{Gk}\\
FK'\ar [r]_{f'}& GK'
}
$$
The \textit{projection functor} $P:\Ins(F,G)\to\ck$ sends $f:FK\to GK$ to $K$. 
 
\begin{propo}\label{prop3.7}
Let $F,G:\ck\to\cl$ be strongly class-accessible functors. Then $\Ins(\varphi,\psi)$ is a class-accessible category and the projection
functor $P:\Ins(F,G)\to\ck$ is strongly class-accessible.
\end{propo}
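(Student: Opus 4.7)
The plan is to imitate the proofs of Propositions \ref{prop3.1} and \ref{prop3.6}: assume $F$ and $G$ are strongly class-$\lambda$-accessible and show that $\Ins(F,G)$ is class-$\lambda^+$-accessible with $P$ strongly class-$\lambda^+$-accessible. The easy half is that, because $F$ and $G$ preserve $\lambda$-filtered colimits, a $\lambda$-filtered diagram $(K_i,f_i)$ in $\Ins(F,G)$ has a colimit $(K,f)$ whose $K$-component is $\colim K_i$ in $\ck$ and whose $f$-component is induced uniquely from the $f_i$; so $P$ creates $\lambda$-filtered colimits. A short argument using $\lambda^+$-presentability of $FK$ in $\cl$ then shows that whenever $K$ is $\lambda^+$-presentable in $\ck$ the object $(K,f)$ is $\lambda^+$-presentable in $\Ins(F,G)$, so $P$ will automatically preserve $\lambda^+$-presentables.

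The real task is to write an arbitrary $(K,f)\in\Ins(F,G)$ as a $\lambda^+$-filtered colimit of such $\lambda^+$-presentables. By Remark \ref{re2.8} both $F$ and $G$ are strongly class-$\lambda^+$-accessible; and by Remark \ref{re2.3}(4) I pick a small full $\ca\subseteq\pres_{\lambda^+}\ck$ closed under $\lambda^+$-small $\lambda$-filtered colimits such that $K$ is a canonical $\lambda^+$-filtered colimit of $\ca$-objects along $D:\cd\to\ck$, with cocone $(k_d:Dd\to K)_{d\in\cd}$. For each $d_0\in\cd$ I will produce a $\lambda^+$-presentable $(K',f')\in\Ins(F,G)$ and a morphism $(K',f')\to (K,f)$ through which $k_{d_0}:Dd_0\to K$ factors; this suffices, by the standard argument used in \ref{prop3.1} and \ref{prop3.6}, to exhibit $(K,f)$ as a $\lambda^+$-filtered colimit of $\lambda^+$-presentables.

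The main obstacle, and the only genuine novelty compared to Proposition \ref{prop3.6}, is the construction of $f'$ via a $\lambda$-step back-and-forth akin to the one in Proposition \ref{prop3.1} but producing a single arrow rather than a pair of inverse isomorphisms. Starting from $d_0$, I build a chain $d_0\to d_1\to\dots\to d_\alpha\to\dots$, $\alpha<\lambda$, in $\cd$, together with morphisms $g_\alpha:FDd_\alpha\to GDd_{\alpha+1}$ subject to the commutativity $Gk_{d_{\alpha+1}}\cdot g_\alpha=f\cdot Fk_{d_\alpha}$ and the cohesion $g_\beta\cdot FD(\delta_{\alpha\beta})=GD(\delta_{\alpha+1,\beta+1})\cdot g_\alpha$ for $\alpha<\beta<\lambda$, where $\delta_{\alpha\beta}:d_\alpha\to d_\beta$ denotes the transition morphisms in $\cd$. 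At a successor stage, $FDd_\alpha$ is $\lambda^+$-presentable in $\cl$ and $GK=\colim_\cd GDd$ along a $\lambda^+$-filtered diagram, so $f\cdot Fk_{d_\alpha}$ factors as $Gk_e\cdot g$ for some $d_\alpha\to e$ in $\cd$ and $g:FDd_\alpha\to GDe$; the fewer than $\lambda$ cohesion equalities required with previously chosen $g_\beta$ hold after post-composition with $Gk_e$, hence by $\lambda^+$-presentability of the various $FDd_\beta$ combined with $\lambda^+$-filteredness of $\cd$ a single enlargement $e\to d_{\alpha+1}$ makes all of them hold on the nose. Limit stages below $\lambda$ are handled by taking an upper bound in $\cd$.

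Finally, set $K'=\colim_{\alpha<\lambda}Dd_\alpha$; it lies in $\ca$ because $\ca$ is closed under $\lambda^+$-small $\lambda$-filtered colimits, and is therefore $\lambda^+$-presentable in $\ck$. The cohesion relations make the composites $Gk^{K'}_{d_{\alpha+1}}\cdot g_\alpha:FDd_\alpha\to GK'$ into a cocone on $F\circ D$ restricted to the chain, inducing a unique $f':FK'\to GK'$; the commutativity relations make the induced map $K'\to K$ a morphism $(K',f')\to(K,f)$ in $\Ins(F,G)$ through which $k_{d_0}$ factors. Thus $(K,f)$ is a $\lambda^+$-filtered colimit of $\lambda^+$-presentables, $\Ins(F,G)$ is class-$\lambda^+$-accessible, and $P$, which sends $(K',f')$ to the $\lambda^+$-presentable $K'$, is strongly class-$\lambda^+$-accessible.
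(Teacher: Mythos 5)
Your proposal is correct and follows essentially the same route as the paper: build a $\lambda$-chain $d_0\to d_1\to\cdots$ in the canonical diagram together with morphisms $FDd_\alpha\to GDd_{\alpha+1}$ satisfying the compatibility-with-$f$ and cohesion conditions, take the colimit at stage $\lambda$ inside a small $\ca\subseteq\pres_{\lambda^+}\ck$ closed under $\lambda^+$-small $\lambda$-filtered colimits, and conclude that $(K,f)$ is a $\lambda^+$-filtered colimit of $\lambda^+$-presentable objects of $\Ins(F,G)$. Your write-up is in fact somewhat more careful than the paper's (the explicit indexing of the cohesion equations and the justification of the successor step via $\lambda^+$-presentability of $FDd_\beta$ plus $\lambda^+$-filteredness of $\cd$ fill in details the paper compresses into ``continuing this procedure'').
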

\begin{proof}
Assume that $F$ and $G$ are strongly class-$\lambda$-accessible. It is easy to see that $\Ins(F,G)$ has $\lambda$-filtered colimits
preserved by $P$. We will show that each object $f:FK\to GK$ in $\Ins(F,G)$ is a $\lambda^+$-directed colimit of objects
$a:FA\to GA$ with $A$ $\lambda^+$-presentable in $\ck$. Since $a$ is then $\lambda^+$-presentable in $\Ins(F,G)$, we will get
that $\Ins(F,G)$ is class-$\lambda^+$-accessible and $P$ is strongly class-$\lambda^+$-accessible.

We will proceed in a similar way as in the proof of \ref{prop3.1}. First, we know that $\ck$ is class-$\lambda^+$-accessible. 
Consider $f:FK\to GK$ in $\Ins(F,G)$ and take a small full subcategory $\ca$ of $\pres_{\lambda^+}\ck$ such that $K$ is a canonical
$\lambda^+$-filtered colimit of $\ca$-objects. We also assume that $\ca$ is closed in $\ck$ under $\lambda^+$-small
$\lambda$-filtered colimits. We denote the canonical diagram as $D:\cd\to\ck$ and its canonical colimit as
$(k_d:Dd\to K)_{d\in\cd}$. Let $d_0\in\cd$. There is $k_{01}:d_0\to d_1$ in $\cd$ such that $fF(k_{d_0})=G(k_{d_1})f_0$ for some 
$f_0:FDd_0\to GDd_1$. Continuing this procedure, we get a chain $(k_{ij}:d_i\to d_j)_{i<j<\lambda}$ and morphisms 
$f_i:FDd_i\to FDd_j$ such that 
$$
fF(k_{d_i})=G(k_{d_j})f_i
$$
and
$$
f_jFD(k_{ij})=GD(k_{ij})f_i
$$
for $i<j<\lambda$. In limit steps, we take upper bounds. Now, $A=\colim Dd_i$ and $a=\colim f_i:A\to A$ where $i<\lambda$ implies 
that $a$ is $\lambda^+$-presentable in $\Ins(F,G)$. Since $A$ is in $\ca$, we expressed $K$ as a $\lambda^+$-filtered colimit 
of $\lambda^+$-presentable objects.
\end{proof}

\begin{rem}\label{re3.8}
{\em
(1) Like in \cite{AR}, 2.77, the last three propositions imply that class-accessible categories are closed under lax limits
of strongly class-accessible functors.

(2) Given a class-$\lambda$-accessible category $\ck$, the morphism category $\ck^\to$ is class-$\lambda$-accessible
and both projections $P_1,P_2:\ck^\to\to\ck$ are strongly class-$\lambda$-accessible. Here, $P_1$ sends $A\to B$ to $A$ 
and $P_2$ to $B$. This is analogous to \cite{AR} Ex. 2.c. Since $\ck^\to$ is cocomplete whenever $\ck$ is cocomplete,
$\ck^\to$ is class-locally $\lambda$-presentable provided that $\ck$ is class-locally $\lambda$-presentable. $\lambda$-presentable
objects in $\ck^\to$ are morphisms $A\to B$ such that both $A$ and $B$ are $\lambda$-presentable in $\ck$. Such morphisms will
be called $\lambda$-\textit{presentable}.
}
\end{rem}

\begin{propo}\label{prop3.9}
Let $\ck$ be a class-locally $\lambda$-presentable category and $\ca$ a full subcategory of $\pres_\lambda\ck$.
Then the closure $\tilde{\ca}$ of $\ca$ under colimits in $\ck$ is a class-locally $\lambda$-presentable category.
\end{propo}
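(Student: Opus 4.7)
Here is the plan. Let $\cb$ denote the closure of $\ca$ under $\lambda$-small colimits in $\ck$. Since $\pres_\lambda\ck$ is itself closed under $\lambda$-small colimits and contains $\ca$, we have $\cb\subseteq\pres_\lambda\ck$, so each object of $\cb$ is $\lambda$-presentable in $\ck$. The strategy is to identify $\tilde\ca$ with $\Ind_\lambda(\cb)$ and then invoke Theorem~\ref{th2.4}.

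Every small colimit in $\ck$ can be rewritten as a $\lambda$-filtered colimit of $\lambda$-small colimits (the general fact used already in Example~\ref{ex2.2}(2)), so $\tilde\ca$ coincides with the closure of $\cb$ under $\lambda$-filtered colimits in $\ck$. In particular, every object of $\tilde\ca$ is a $\lambda$-filtered colimit of $\cb$-objects, and since $\tilde\ca$ is closed under $\lambda$-filtered colimits in $\ck$, the $\cb$-objects remain $\lambda$-presentable when viewed in $\tilde\ca$. By the universal property of $\Ind_\lambda(\cb)$, the inclusion $\cb\hookrightarrow\tilde\ca$ extends to a fully faithful, $\lambda$-filtered-colimit-preserving functor $\Ind_\lambda(\cb)\to\tilde\ca$, which is essentially surjective by the previous sentence; hence $\tilde\ca\simeq\Ind_\lambda(\cb)$ and $\tilde\ca$ is class-$\lambda$-accessible. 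Cocompleteness of $\tilde\ca$ is immediate from the construction, or equivalently from Remark~\ref{re2.3}(2) since $\cb$ is $\lambda$-cocomplete by definition.

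For completeness, by Theorem~\ref{th2.4} it suffices to show that $\cb$ is approximately complete; this is the main obstacle. Given a small diagram $D\colon\cd\to\cb$ with limit $L=\lim_\ck D\in\ck$, one uses Remark~\ref{re2.3}(4) to pick a small full subcategory of $\pres_\lambda\ck$ presenting $L$ as a canonical $\lambda$-filtered colimit, and approximate completeness of $\pres_\lambda\ck$ (Remark~\ref{re2.5}) to obtain a weakly initial set of cones over $D$ valued in $\pres_\lambda\ck$. The delicate step is to refine this into a weakly initial set of cones with domains in $\cb$. I would proceed by a transfinite zigzag construction in the spirit of the proof of Proposition~\ref{prop3.1}: alternately factor the $\pres_\lambda\ck$-approximations of $L$ through $\cb$-objects and conversely (using $\lambda$-presentability of objects of $\cb$ to factor maps into the filtered colimit $L$), taking $\lambda$-small colimits (which remain in $\cb$) at successor stages and unions of chains at limit stages. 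Once approximate completeness of $\cb$ is verified, Theorem~\ref{th2.4} delivers completeness of $\tilde\ca\simeq\Ind_\lambda(\cb)$, so $\tilde\ca$ is class-locally $\lambda$-presentable.
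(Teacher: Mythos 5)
Your reduction of the statement to the completeness of $\tilde{\ca}\simeq\Ind_\lambda(\cb)$ is sound and, up to that point, coincides with the paper's own proof: the paper likewise passes to the closure $\overline{\ca}$ of $\ca$ under $\lambda$-small colimits, identifies $\tilde{\ca}$ with $\Ind_\lambda(\overline{\ca})$, and obtains cocompleteness from Remark~\ref{re2.3}(2). The gap is exactly where you locate the ``delicate step,'' and your proposed fix does not work. The zigzag of Proposition~\ref{prop3.1} is powered by the isomorphisms $f\colon FK\to M$ and $g\colon GL\to M$: it is $f^{-1}$ and $g^{-1}$ that provide maps from an approximation of one object into a \emph{known} $\lambda$-filtered colimit presentation of another, after which presentability factors them. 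In your situation there is no analogous map: given the limit $L$ of $D$ in $\ck$ and an approximation $A\to L$ with $A\in\pres_\lambda\ck$, nothing lets you factor $A\to L$ (or the induced cone $A\to D$) through an object of $\cb$, because $L$ is not known to be a $\lambda$-filtered colimit of $\cb$-objects --- that is essentially the statement you are trying to prove. Since $\cb$ is a proper class, the cones over $D$ with vertex in $\cb$ form a proper class, and $\lambda$-presentability of their vertices only factors them through $\pres_\lambda\ck$-objects, not through $\cb$-objects; so the transfinite construction cannot get started.

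The paper disposes of completeness by a different mechanism, made explicit in Remark~\ref{re3.10}: $\tilde{\ca}=\Ind_\lambda(\overline{\ca})$ is \emph{coreflective} in $\ck=\Ind_\lambda(\pres_\lambda\ck)$, the coreflector being induced by the inclusion $\overline{\ca}\hookrightarrow\pres_\lambda\ck$ as in the proof of \cite{AR}, 1.45. A coreflective full subcategory of a complete category is complete (coreflect the limit computed in $\ck$), and approximate completeness of $\cb$ then follows a posteriori from Theorem~\ref{th2.4} rather than serving as the route to completeness. To repair your argument you should either establish this coreflectivity, or give a genuinely new construction of the weakly initial set of cones; as written, the appeal to the method of Proposition~\ref{prop3.1} is not a routine adaptation but a missing idea.
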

\begin{proof}
We know that 
$$
\ck=\Ind_\lambda(\pres_\lambda\ck)
$$
Let $\overline{\ca}$ be the closure of $\ca$ under colimits of less than $\lambda$ objects. Then $\Ind_\lambda(\overline{\ca})$
is class-locally $\lambda$-presentable (by \ref{re2.3} (2)) and, clearly, it is isomorphic to $\tilde{\ca}$.
\end{proof}

\begin{rem}\label{re3.10}
{\em
Moreover, $\tilde{\ca}$ is coreflective in $\ck$. The coreflector is the functor 
$$
\Ind_\lambda(\pres_\lambda\ck)\to\Ind_\lambda(\ca)
$$
induced by the inclusion of $\ca$ into $\pres_\lambda\ck$ (see the proof of \cite{AR}, 1.45).
}
\end{rem}

\section{Weak factorization systems}
An important property of a locally presentable category $\ck$ is that every set $\cc$ of morphisms yields
a weak factorization system $(\cof(\cc),\cc^\square)$ (see \cite{Beke}). We are going to extend this property to class-locally
presentable categories. Recall that, given morphisms $f:A\to B$ and $g:C\to D$ in a category $\ck$, we write
$$
f\square g\quad\quad (f\perp g)
$$
if, in each commutative square
$$
\xymatrix{
A \ar [r]^{u} \ar [d]_{f}& C \ar [d]^{g}\\
B\ar [r]_{v}& D
}
$$
there is a (unique) diagonal $d:B\to C$ with $df=u$ and $gd=v$.

For a class $\cc$ of morphisms of $\ck$ we put
\begin{align*}
\cc^{\square}&=\{g| f\square g \mbox{ for each } f\in \cc\},\\
{}^{\square}\cc&= \{f| f\square g \mbox{ for each } g\in \cc\},\\
\cc^{\perp}&=\{g| f\perp g \mbox{ for each } f\in \cc\},\\
{}^{\perp}\cc&= \{f| f\perp g \mbox{ for each } g\in \cc\}.\\
\end{align*}
The smallest class of morphisms of $\ck$ containing isomorphisms and being closed under transfinite compositions,
pushouts of morphisms from $\cc$ and retracts (in the category $\ck^\to$ of morphisms of $\ck$) is denoted 
by $\cof(\cc)$ while the smallest class of morphisms of $\ck$ closed under all colimits (in $\ck^\to$) and containing 
$\cc$ is denoted as $\colim(\cc)$. Finally, $\Inj(\cc)$ will denote the full subcategory of $\ck$ consisting
of all objects $K$ such that the unique morphism $K\to 1$ to the terminal object of $\ck$ belongs to $\cc^\square$.
These objects $K$ are precisely the objects injective to each morphism $h\in\cc$. Analogously, $\Ort(\cc)$
denotes the full subcategory consisting of objects $K$ such that $K\to 1$ belongs to $\cc^\perp$. These are the objects
orthogonal to each $h\in\cc$.

Given two classes $\cl$ and $\crr$ of morphisms of $\ck$, the pair $(\cl,\crr)$ is called a \textit{weak factorization
system} if
\begin{enumerate}
\item $\crr = \cl^\square$, $\cl = {}^\square \crr$
\end{enumerate}
and
\begin{enumerate}
\item[(2)] any morphism $h$ of $\ck$ has a factorization $h= gf$ with $f\in\cl$ and $g\in\crr$.
\end{enumerate}
The pair $(\cl,\crr)$ is called a \textit{factorization system} if condition (1) is replaced by
\begin{enumerate}
\item[(1')] $\crr = \cl^\perp$, $\cl = {}^\perp \crr$.
\end{enumerate}

\begin{defi}\label{def4.1}
{\em
A class $\cc$ of morphisms of a category $\ck$ is called \textit{cone-coreflective} if, for each morphism $f$ in $\ck$, 
the comma-category $\cc\downarrow f$ has a weakly terminal set.
}
\end{defi}

\begin{rem}\label{re4.2}
{\em
(1) This means that for each $f$ there is a subset $\cc_f$ of $\cc$ such that each morphism $g\to f$ in $\ck^\to$
with $g\in\cc$ factorizes as
$$
g\to h\to f
$$
with $h\in\cc_f$. 

Our terminology is taken from \cite{AR} (the cone $(h\to f)_{h\in\cc_f}$ forms a cone-coreflection from $\cc$ to $f$).
In \cite{Dr}, one calls such classes locally small. This is suggestive because each set of morphisms has this property. 
But this term has an established different meaning in category theory and, moreover, there is no good name for the dual 
concept.  

A union $\cc\cup\cc'$ of two cone-coreflective classes is cone-coreflective because $\cc_f\cup\cc'_f$ is
weakly terminal in $(\cc\cup\cc'\downarrow f)$ for each $f$. In fact, even a union of a set of cone-coreflective
classes is cone-coreflective.

(2) Given a weak factorization system $(\cl,\crr)$, then the class $\cl$ is cone-coreflective. This immediately
follows from the fact that the morphism $f$ in a factorization $h=gf$ of $h$ is a weak coreflection of $h$.
Thus $\cl$ is weakly coreflective, consequently cone-coreflective.

(3) The following  result was proved in \cite{C} where it is called \textit{a generalized small object argument}. 
The proof uses an idea originated in \cite{Dr}. Let $\cc$ be cone-coreflective class of morphisms of a cocomplete category $\ck$. 
Suppose that the domains of all elements in $\cc$ have bounded presentation ranks. Then every morphism $f\in \ck^{\to}$ admits 
a factorization into a $\cc$-cellular morphism (see below) followed by a morphism in $\cc^\square$.

In order to factorize a morphism $f\colon A\to B$, we take a colimit of the diagram  
$$
\xymatrix{
A \\
&&\\
C\ar[uu]^u  \ar@{.}[ur] \ar [rr]_h && D
}
$$
indexed by triples $(u,h,v)$ with $h\in\cc_f$ such that $fu=vh$. This means the pushout of 
$$
\xymatrix@=4pc{
A &\\
\underset{(u,h,v)}{\coprod}C \ar [u]^{<u>} \ar [r]_{\coprod h} & \underset{(u,h,v)}{\coprod}D
}
$$
We obtain a factorization
$$
A=A_0 \xrightarrow{\quad  f_{01}\quad} A_1
             \xrightarrow{\quad f_1\quad} B.
$$
Now, one applies this construction to $f_1$ and continues up to a regular cardinal $\lambda$ such that each
morphism from $\cc$ has a $\lambda$-presentable domain (one takes colimits in limit steps). This implies that $f_\lambda$ is in
$\cc^\square$ and thus
$$
A \xrightarrow{\quad  f_{0\lambda}\quad} A_\lambda
             \xrightarrow{\quad f_\lambda\quad} B
$$
is the desired factorization of $f$. Let us stress that the morphism $f_{0\lambda}$ is a transfinite composition
of pushouts of elements of $\cc$. Such morphisms are called $\cc$-\textit{cellular}.

The resulting factorization of $f$ depends on the subset
$$
\cc^\ast_f=\bigcup\limits_{i<\lambda} \cc_{f_i}.
$$
Given a morphism $(a,b)\colon f\to f'$ where $f\colon A\to B$ and $f'\colon A'\to B'$ in $\ck^\to$, we get the induced morphism 
$a_\lambda:A_\lambda\to A'_\lambda$ with $a_\lambda f_{0\lambda}=f'_{0\lambda} a$ and $f'_\lambda a_\lambda=bf_\lambda$. 
The reason is that, given a triple $(u,h,v)$ for $f$, the composition $(au,bv): h\to f'$ factorizes through a triple $(u',h',v')$
for $f'$. Thus our factorization $\ck^\to\to\ck^\to$ acts both on objects and on morphisms. But we cannot expect that it is functorial, 
i.e., that it preserves composition. The problem is in finding compatible choices of factorizations of triples above.  

There is a canonical choice of a triple above in the case that $\cc^\ast_f\subset\cc^\ast_{f'}$. Thus we can make our factorization functorial 
on each small full subcategory $\ca$ of $\ck^\to$. It suffices to use 
$$
\cc^\ast_\ca =\bigcup\limits_{f\in\ca} \cc^\ast_f
$$
for factorizing. Of course, for different full subcategories $\ca$, the resulting factorizations are different.

(4) Let $\ck$ be class-locally $\mu$-presentable category and $\cc$ be a cone-coreflective class of morphisms whose domains and codomains
are $\lambda$-presentable. Following \ref{re2.8}, we can assume that $\lambda<\mu$. Consider a small full subcategory $\ca$ of $\pres_\mu\ck$.
Since $(\Ind_\mu\ca)^\to=\Ind_\mu(\ca^\to)$, 
$$
\cc_\ca =\bigcup\limits_{f\in\ca} \cc_f
$$
can be used as $\cc_h$ for each morphism $h$ in $\Ind_\mu\ca$. Since pushouts commute with $\mu$-filtered colimits in $\ck$, 
$\cc^\ast_\ca$ can be used as $\cc^\ast_h$. Thus we can make our factorization functorial on $\Ind_\mu\ca$. Moreover,
the corresponding functor is $\mu$-accessible. Thus it is strongly $\nu$-accessible for some $\nu$ but this cardinal depends on $\ca$ 
in general. Like in (3), the factorization itself depends on $\ca$.

In the case when $\ck$ is locally presentable and $\cc$ is a set, we have $\ck=\Ind_\mu\ca$ for some $\mu$ and we get a strongly accessible 
functorial factorization on $\ck$. This was claimed by J. H. Smith and our proof completes those from \cite{D}, 7.1 and \cite{R1} 3.1 (here, 
one should use our triples $(u,h,v)$ instead of pairs $(u,h)$).
}
\end{rem}

We have proved the following theorem.

\begin{theo}\label{th4.3} 
Let $\ck$ be a class-locally presentable category, $\cc$ a cone-coreflective class of morphisms of $\ck$ and
assume that there is a regular cardinal $\lambda$ such that each morphism from $\cc$ has the $\lambda$-presentable 
domain. Then $(\cof(\cc),\cc^\square)$ is a weak factorization system in $\ck$.
\end{theo}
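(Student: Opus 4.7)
The plan is to assemble the theorem from two ingredients: the generalized small object argument of Remark \ref{re4.2} (3), which supplies the factorizations, and the classical retract argument, which upgrades these factorizations into the full lifting axiom of a weak factorization system. The hypotheses of the theorem match exactly those required by \ref{re4.2} (3): the class $\cc$ is cone-coreflective, and the domains of its members are uniformly $\lambda$-presentable. Applied to any morphism $h$ of $\ck$, that construction yields a factorization $h = g\cdot f$ in which $f$ is $\cc$-cellular (hence in $\cof(\cc)$ by definition) and $g\in\cc^\square$, delivering axiom (2) of a weak factorization system.

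For axiom (1), I would first dispose of the formal inclusions. We have $\cof(\cc)\subseteq{}^\square(\cc^\square)$ because ${}^\square(\cc^\square)$ contains $\cc$ and is closed under isomorphisms, pushouts, transfinite compositions, and retracts in $\ck^\to$ --- precisely the operations that generate $\cof(\cc)$ from $\cc$. The inclusion $\cof(\cc)^\square\subseteq\cc^\square$ is immediate from $\cc\subseteq\cof(\cc)$, and the reverse $\cc^\square\subseteq\cof(\cc)^\square$ follows because, for any $g\in\cc^\square$, the class ${}^\square\{g\}$ contains $\cc$ and is closed under the same four operations, and therefore contains $\cof(\cc)$.

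The remaining inclusion ${}^\square(\cc^\square)\subseteq\cof(\cc)$ is the classical retract argument. Given $f\in{}^\square(\cc^\square)$, factor $f=g\cdot f'$ as in the first paragraph, with $f'\in\cof(\cc)$ and $g\in\cc^\square$. The square with top arrow $f'$, bottom arrow $\id$, left vertical $f$ and right vertical $g$ commutes, and since $f\square g$ it admits a diagonal $d$ satisfying $gd=\id$ and $df=f'$; this diagonal exhibits $f$ as a retract of $f'$ in $\ck^\to$, so $f\in\cof(\cc)$ by closure of $\cof(\cc)$ under retracts.

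The hard part is absorbed into Remark \ref{re4.2} (3) itself: one must check that the transfinite construction terminates at some stage $\lambda$ with $f_\lambda\in\cc^\square$, despite $\cc$ being a proper class rather than a set. This is where the cone-coreflective hypothesis does the real work. A lifting problem against $f_\lambda$ indexed by some $g\in\cc$ factors, using $\lambda$-presentability of $\dom g$, through some $A_i$ with $i<\lambda$; cone-coreflectivity at stage $i$ then supplies a morphism in the weakly terminal set $\cc_{f_i}$ through which the whole triple factors, and the $(i{+}1)$-st pushout step provides the required diagonal filler. Once this termination is granted, the rest of the argument is purely formal and identical to the locally presentable case.
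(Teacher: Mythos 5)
Your proof is correct and follows essentially the same route as the paper, which derives the theorem directly from the generalized small object argument described in Remark \ref{re4.2} (3) (the factorization into a $\cc$-cellular morphism followed by a morphism in $\cc^\square$), with the remaining lifting axiom supplied by the standard retract argument. Your explicit verification of the formal inclusions and of the termination at stage $\lambda$ just spells out what the paper leaves implicit.
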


A full subcategory $\cl$ of $\ck$ is called \textit{weakly reflective} in $\ck$ if, for each $K$ in $\ck$,
the comma-category $K\downarrow\cl$ has a weakly initial object. It means the existence of a morphism
$r:K\to K^\ast$ with $K^\ast\in\cl$ such that each morphism $K\to L$ with $L\in\cl$ factorizes through $r$.

\begin{coro}\label{cor4.4}
Let $\ck$ be a class-locally presentable category, $\cc$ a cone-coreflective class of $\lambda$-presentable morphisms of $\ck$.  
Then $\Inj(\cc)$ is weakly reflective and closed under $\lambda$-filtered colimits in $\ck$.
\end{coro}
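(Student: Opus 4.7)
The plan is to derive both assertions directly from Theorem \ref{th4.3}, which provides the weak factorization system $(\cof(\cc),\cc^\square)$ on $\ck$. Since $\ck$ is class-locally $\mu$-presentable for some $\mu$ (in particular complete), it has a terminal object $1$.

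For weak reflectivity, given an object $K\in\ck$, I factorize the unique morphism $K\to 1$ through the weak factorization system as
$$
K\xrightarrow{\,r\,} K^\ast\xrightarrow{\,p\,} 1
$$
with $r\in\cof(\cc)$ and $p\in\cc^\square$. By definition $K^\ast\in\Inj(\cc)$. To see $r$ is a weak reflection of $K$ into $\Inj(\cc)$, take any $f\colon K\to L$ with $L\in\Inj(\cc)$; then $L\to 1$ lies in $\cc^\square$, and the square
$$
\xymatrix@=3pc{
K \ar[r]^{f} \ar[d]_{r} & L \ar[d] \\
K^\ast \ar[r] & 1
}
$$
commutes. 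Since $r\in\cof(\cc)\subseteq{}^\square(\cc^\square)$, there exists a diagonal $d\colon K^\ast\to L$ with $dr=f$, exhibiting the desired factorization through $r$.

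For closure under $\lambda$-filtered colimits, suppose $(k_d\colon K_d\to K)_{d\in\cd}$ is a colimit cocone of a $\lambda$-filtered diagram in $\Inj(\cc)$, formed in $\ck$. Fix $h\colon A\to B$ in $\cc$ and a morphism $u\colon A\to K$. By hypothesis $h$ is $\lambda$-presentable, so in particular $A$ is $\lambda$-presentable in $\ck$; consequently $u$ factors as $u=k_d u'$ for some $d\in\cd$ and some $u'\colon A\to K_d$. Since $K_d\in\Inj(\cc)$, the morphism $u'$ extends along $h$ to a morphism $v'\colon B\to K_d$ with $v'h=u'$. Then $v=k_d v'$ satisfies $vh=u$, proving $K\in\Inj(\cc)$.

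There is no real obstacle: the work has already been done in Theorem \ref{th4.3}. The only small point to note is that the terminal object exists (guaranteed by the completeness clause in the definition of class-local presentability) and that the closure argument only uses $\lambda$-presentability of the \emph{domains} in $\cc$, which is part of the hypothesis on $\lambda$-presentable morphisms as defined in Remark \ref{re3.8}(2).
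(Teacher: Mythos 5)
Your proof is correct and follows essentially the same route as the paper's: the weak reflection is obtained by applying the $(\cof(\cc),\cc^\square)$ factorization of Theorem~\ref{th4.3} to $K\to 1$ (with the lifting argument you spell out being exactly what the paper leaves implicit), and closure under $\lambda$-filtered colimits is proved by factoring $u\colon A\to K$ through some $K_d$ using $\lambda$-presentability of the domain and then using injectivity of $K_d$. Your remark that only $\lambda$-presentability of domains is needed for the closure argument is accurate and consistent with the hypotheses of Theorem~\ref{th4.3}.
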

\begin{proof}
A weak reflection of $K$ is given by a $(\cof(\cc),\cc^\square)$ factorization
$$
K \xrightarrow{\quad  r\quad} K^\ast
             \xrightarrow{\quad \quad} 1.
$$
Assume that $(k_d:K_d\to K)_{d\in\cd}$ is a $\lambda$-filtered colimit of objects $K_d\in\Inj(\cc)$. Given
$h:C\to D$ in $\cc$ and $u:C\to K$, there is $d\in\cd$ and $u':C\to K_d$ with $u=k_du'$. There is $v:D\to K_d$
with $vh=u'$. Since $k_dvh=u$, $K\in\Inj(\cc)$.
\end{proof}

\begin{coro}\label{cor4.5}
Let $\ck$ be a class-locally presentable category, $\cc$ a cone-coreflective class of morphisms of $\ck$. Let $\lambda$ be a regular cardinal 
such that each morphism from $\cc$ is $\lambda$-presentable. Then $\cc^\square$ is closed under $\lambda$-filtered colimits in $\ck^\to$.
\end{coro}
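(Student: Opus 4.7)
The plan is to adapt the standard argument that $\cc^\square$ is closed under filtered colimits of the codomain, using $\lambda$-presentability of the morphisms in $\cc$ in place of the usual smallness hypothesis. Let $(g_d\colon C_d\to D_d)_{d\in\cd}$ be a $\lambda$-filtered diagram in $\cc^\square$ with colimit $g\colon C\to D$ in $\ck^\to$; since $\ck$ is cocomplete and colimits in $\ck^\to$ are pointwise, I would write $C=\colim C_d$, $D=\colim D_d$ with colimit injections $c_d\colon C_d\to C$ and $d_d\colon D_d\to D$ satisfying $gc_d=d_dg_d$. To check $g\in\cc^\square$, fix $h\colon X\to Y$ in $\cc$ and an arbitrary commutative square $(u,v)\colon h\to g$; by hypothesis and \ref{re3.8}(2) both $X$ and $Y$ are $\lambda$-presentable in $\ck$.

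The main technical step is to replace $(u,v)$ by a square landing in some stage $g_{d_1}$ of the diagram. First I would use $\lambda$-presentability of $Y$ and $X$ together with $\lambda$-filteredness of $\cd$ to find a common index $d_0$ and morphisms $\bar{u}\colon X\to C_{d_0}$, $\bar{v}\colon Y\to D_{d_0}$ with $u=c_{d_0}\bar{u}$ and $v=d_{d_0}\bar{v}$. The commutativity $gu=vh$ then becomes
$$
d_{d_0}(g_{d_0}\bar{u})=gc_{d_0}\bar{u}=d_{d_0}(\bar{v}h),
$$
so the parallel pair $g_{d_0}\bar{u},\bar{v}h\colon X\to D_{d_0}$ becomes equal in $D$. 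Invoking $\lambda$-presentability of $X$ once more yields a connecting arrow $D_{d_0}\to D_{d_1}$ in the diagram that equalizes them; transporting $\bar{u},\bar{v}$ along the corresponding $C_{d_0}\to C_{d_1}$ and $D_{d_0}\to D_{d_1}$ gives $\hat{u}\colon X\to C_{d_1}$, $\hat{v}\colon Y\to D_{d_1}$ with $g_{d_1}\hat{u}=\hat{v}h$ and still $c_{d_1}\hat{u}=u$, $d_{d_1}\hat{v}=v$.

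Since $g_{d_1}\in\cc^\square$, the square $(\hat{u},\hat{v})\colon h\to g_{d_1}$ admits a diagonal $w\colon Y\to C_{d_1}$ with $wh=\hat{u}$ and $g_{d_1}w=\hat{v}$, and I would then claim that $c_{d_1}w\colon Y\to C$ fills $(u,v)$: indeed $c_{d_1}wh=c_{d_1}\hat{u}=u$ and $g(c_{d_1}w)=d_{d_1}g_{d_1}w=d_{d_1}\hat{v}=v$. I expect the only real obstacle to be the bookkeeping in the second paragraph, namely the two-step factorization-then-equalization procedure which requires separate appeals to $\lambda$-filteredness of $\cd$ and $\lambda$-presentability of $X$; both ingredients rely exclusively on $\dom h$ and $\cod h$ being $\lambda$-presentable, so nothing beyond this and cocompleteness of $\ck$ is used.
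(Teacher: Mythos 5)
Your argument is correct: the factor-then-equalize bookkeeping is exactly the standard presentability argument, and every step checks out (in particular the key identity $d_{d_0}(g_{d_0}\bar u)=gu=vh=d_{d_0}(\bar vh)$ and the final verification that $c_{d_1}w$ is a diagonal for $(u,v)$). The paper, however, takes a shorter route: it observes that $g$ has the right lifting property with respect to $f\colon A\to B$ if and only if $g$, viewed as an object of $\ck^\to$, is injective with respect to the single morphism $(f,\id_B)\colon f\to\id_B$ of $\ck^\to$; since $f$ and $\id_B$ are $\lambda$-presentable objects of $\ck^\to$ (by \ref{re3.8}(2)), the closure under $\lambda$-filtered colimits is then immediate from \ref{cor4.4} applied in $\ck^\to$. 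Your two-step procedure (factor $u$ and $v$ separately through a common stage, then equalize $g_{d_0}\bar u$ and $\bar vh$ using $\lambda$-presentability of $X$) is precisely the unfolding of the statement that $f$ is $\lambda$-presentable as an object of the arrow category, so the mathematical content coincides; what the paper's reduction buys is brevity and reuse of \ref{cor4.4}, while your direct proof is self-contained and makes visible exactly where presentability of the domain and of the codomain are each used. A minor remark: neither proof needs the cone-coreflectivity of $\cc$, which appears in the statement only for uniformity with the surrounding results.
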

\begin{proof}
It suffices to observe that $g$ has a right lifting property w.r.t. $f:A\to B$ if and only if $g$ is injective 
in $\ck^\to$ to the morphism
$$
(f,\id_B):f\to\id_B.
$$
Since $f$ is $\lambda$-presentable in $\ck^\to$, the result follows from \ref{cor4.4}. 
\end{proof}

The following example shows that, in \ref{cor4.4}, $\cc^\square$ and $\Inj(\cc)$ do not need to be class-accessible.

\begin{exam}\label{ex4.6}
{\em
Let $\co$ be the category whose objects are ordinal numbers (considered as well-ordered sets) and isotone maps.
The category $\cp(\co)$ from \ref{ex2.2} (2) can be understood as a transfinite extension of simplicial sets because $\hom(-,\alpha +1)$
can be taken as the $\alpha$-simplex $\Delta_\alpha$. Since $\co$ is approximately complete, $\cp(\co)$ is class-locally presentable.

Let $\Delta_{1s}$ be the symmetric $1$-simplex. This means the object having two points $0$ and $1$, two non-degenerated $1$-simplices $[0,1]$ 
and $[1,0]$ and all $\alpha$-simplices made out from these. In more detail, $\Delta_{1s}$ is a coequalizer of morphisms $f,g:\Delta_1\to\Delta_2$ 
where $f$ sends $\Delta_1$ to the face $[0,2]$ of $\Delta_2$ and $g$ is the constant morphism on $0$. Let $j:\Delta_1\to\Delta_{1s}$ 
be the inclusion on $[0,1]$. Then, following \ref{th4.3}, $(\cof(j),j^\square)$ is a weak factorization system cofibrantly generated by a single 
morphism. Injectivity with respect to $j$ means that each $1$-simplex is symmetrized. We will show that $\Inj(j)$ is not class-accessible. 
The reason is that, given a regular cardinal $\alpha$, each weak reflection of $\Delta_\alpha$ to $\Inj(j)$ adds at least 
$\alpha$-many $1$-simplices because we have to symmetrize each $1$-simplex in $\Delta_\alpha$. Let $(\Delta_\alpha)^\ast$ denote 
the weak reflection which adds to each $1$-simplex just one symmetric $1$-simplex. We will show that, for each regular cardinal 
$\alpha$, $(\Delta_\alpha)^\ast$ is not $\alpha$-presentable in $\Inj(j)$.  

Let $\alpha$ be a cardinal and $(\Delta_\alpha)^{\ast\beta}$, $\beta\leq\alpha$ extends each $1$-simplex 
in $\Delta_\alpha$ by $\beta$-many symmetric $1$-simplices indexed by ordinals $i<\beta$. Then
$(\Delta_\alpha)^{\ast\alpha}$ is an $\alpha$-directed colimit of $(\Delta_\alpha)^{\ast\beta}$, 
$\beta<\alpha$ in $\Inj(j)$. We index all $1$-simplices in $\Delta_\alpha$ by ordinals $i<\alpha$ and
consider the morphism $f:\Delta_\alpha^\ast\to (\Delta_\alpha)^{\ast\alpha}$ sending the symmetric $1$-simplex 
added to the $i$-th $1$-simplex $e_i$ in $\Delta_\alpha$ to the $i$-th added symmetric $1$-simplex extending $e_i$.
Clearly, $f$ does not factorize through any $(\Delta_\alpha)^{\ast\beta}$, $\beta<\alpha$.

Now, assume that $\Inj(j)$ is class-$\lambda$-accessible. Then $\Delta_\lambda^\ast$ is a $\lambda$-directed
colimit $k_d:K_d\to\Delta_\lambda^\ast$ of $\lambda$-presentable objects $K_d$ in $\Inj(j)$. Since $\Inj(j)$ 
is closed under filtered colimits in $\cp(\co)$, the weak reflection $r:\Delta_\lambda\to\Delta_\lambda^\ast$ 
factorizes through some $k_d$, i.e., $r=k_df$. Since $r$ is a weak reflection, there exists 
$g:\Delta_\lambda^\ast\to K_d$ with $gr=f$. Thus $k_dgr=r$. Consider a non-degenerated $1$-simplex $[i,j]$ in $\Delta_\lambda$.
Then, $\Delta_\lambda^\ast$ contains the new $1$-simplex $[j,i]$ such that $[i,j,i]$ is a $2$-simplex with $[i,i]$ degenerated.
Since $[j,i]$ is the only $1$-simplex with this property, $k_dg$ must send it to itself. Thus $k_dg=\id_{\Delta_\lambda^\ast}$. 
Therefore $\Delta_\lambda^\ast$ is $\lambda$-presentable as a retract of $K_d$, which is a contradiction. Consequently, $\Inj(j)$ 
is not class-accessible. 
}
\end{exam}

Recall that a morphism is called $\lambda$-presentable if it has the $\lambda$-pre\-sen\-tab\-le domain and the $\lambda$-presentable 
codomain (see \ref{re3.8} (2)).

\begin{defi}\label{def4.7}
{
\em
Let $\ck$ be a class-locally $\lambda$-presentable category. A weak factorization system $(\cl,\crr)$ in $\ck$ is called
\textit{cofibrantly class}-$\lambda$-\textit{generated} if $\cl=\cof(\cc)$ for a cone-coreflective class $\cc$ of morphisms such that
\begin{enumerate}
\item[(1)] morphisms from $\cc$ are $\lambda$-presentable and
\item[(2)] any morphism between $\lambda$-presentable objects has a weak factorization with the middle object $\lambda$-presentable.
\end{enumerate}

A cone-coreflective class $\cc$ of morphisms will be called $\lambda$-\textit{bounded} if $(\cof(\cc),\cof(\cc)^\square)$  
satisfies conditions (1) and (2) above.
  
We say that $(\cl,\crr)$ is \textit{cofibrantly class-generated} if there is a regular cardinal $\lambda$ such that $(\cl,\crr)$ 
is cofibrantly class-$\lambda$-generated. The same for \textit{bounded}. 
}
\end{defi}

\begin{rem}\label{re4.8}
{
\em
(1) Any set $\cc$ is a cone-coreflective class. In this case, the factorization is always functorial (because, in \ref{re4.2}, $\cc_f=\cc$
for each $f$). But, \ref{ex4.6} shows that $\cc$ does not need to be bounded. This follows from \ref{cor4.10} but there is a direct
verification. Assume that $\Delta_\lambda\to\Delta_0$ has a weak factorization with the $\lambda$-presentable middle object $K$. There
are morphisms $k:K\to \Delta_\lambda^\ast$ and $g:\Delta_\lambda^\ast\to K$ and, like in \ref{ex4.6}, $kg=\id_{\Delta_\lambda^\ast}$.
Thus $\Delta_\lambda^\ast$ is $\lambda$-presentable in $\cp(\co)$ and, therefore, in $\Inj(j)$, which cannot happen.

(2) Let $\cc$ be a cone-coreflective class of morphisms in a class-locally $\lambda$-presentable category. Given $f:A\to B$ and $\cc_f$, 
we denote by $\ct_f$ a set of all triples $(u,h,v)$ from \ref{re4.2} (3) with $h\in\cc_f$ which are needed for cone-coreflectivity. Let
$$
\ct^\ast_f=\bigcup\limits_{i<\lambda} \ct_{f_i}.
$$
Assume the existence of a regular cardinal $\mu > \lambda$ such that that the cardinality of $\ct_f^\ast$ is smaller than $\mu$ 
for each morphism $f\colon A\to B$ with $A$ and $B$ $\mu$-presentable. Then for such an $f$, all objects $A_i$, $i\leq\lambda$ 
from \ref{re4.2}(3) are $\mu$-presentable and $A_\lambda$ is the middle object in a weak factorization of $f$. Thus $\cc$ is $\mu$-bounded. 
Due to choices of sets $\ct_f$, we cannot expect to get a functorial factorization in this way. Even, we cannot make it functorial
on small full subcategories. 

(3) Given a weak factorization $f=f_2f_1$ from \ref{def4.7} (2), we can choose $\cc_f=\{f_1\}$ and $\ct_f=\{(\id_A,f_1,f_2)\}$.
Then (2) above implies that \ref{re4.2} (2) yields a weak factorization from \ref{def4.7} (2). 

(4) We do not know whether a $\lambda$-bounded $\cc$ is $\mu$-bounded for $\lambda\vartriangleleft\mu$. This is true provided that
the factorization is functorial -- then it is given by a strongly class-$\lambda$-accessible functor which is strongly class-$\mu$-accessible
(see \ref{re2.8}). 

(5) We say that $\cc$ is $(\lambda,\lambda^+)$-\textit{bounded} if it satisfies \ref{def4.7} (1) for $\lambda$ and (2) for $\lambda^+$.
Each $(\lambda,\lambda^+)$-bounded class is $\lambda^+$-bounded. We will show that a union $\cc\cup\cc'$ of two $(\lambda,\lambda^+)$-bounded 
classes is $\lambda^+$-bounded. 

The union is cone-coreflective (see \ref{re4.2} (1)). Let $f:A\to B$ be a morphism between $\lambda^+$-presentable objects. We proceed
by a transfinite construction for $i\leq\lambda$. We start with its $(\cof(\cc),\cc^\square)$ factorization
$$
A_0=A\xrightarrow{\quad  f_{01}\quad} A_1
             \xrightarrow{\quad g_0\quad} B
$$
Then we take a $(\cof(\cc'),(\cc')^\square)$ factorization of $g_0$
$$
A_1\xrightarrow{\quad  f_{12}\quad} A_2
             \xrightarrow{\quad g_1\quad} B
$$
We put $f_{02}=f_{12}f_{01}$ and continue the procedure by a $(\cof(\cc),\cc^\square)$ factorization of $g_1$. In a limit step
$i$, $f_{ji}:A_j\to A_i$ is given by a transfinite composition and $g_i:A_i\to B$ is the induced morphism. We finish at $\lambda$
and get a factorization
$$
f:A\xrightarrow{\quad  f_{0\lambda}\quad} A_\lambda
             \xrightarrow{\quad g_\lambda\quad} B
$$
The object $A_\lambda$ is $\lambda^+$-presentable because $\lambda^+$-presentable objects are closed under $\lambda^+$-small colimits
and both $\cc$ and $\cc'$ satisfy (2) for $\lambda^+$. We have $f_{0\lambda}\in\cof(\cc\cup\cc')$. Finally, $g_\lambda$ is both 
a $\lambda$-filtered colimit of morphisms from $\cc^\square$ and a $\lambda$-filtered colimit of morphisms from $(\cc')^\square$.
Following \ref{cor4.5}, $g_{\lambda}\in \cc^\square\cap(\cc')^\square=(\cc\cup\cc')^\square$.  
}
\end{rem}

\begin{theo}\label{th4.9}
Let $\ck$ be a class-locally $\lambda$-presentable category and $(\cl,\crr)$ a cofibrantly class-$\lambda$-generated weak factorization 
system. Then $\crr$ is a class-$\lambda$-accessible category strongly $\lambda$-accessibly embedded in $\ck^\to$.
\end{theo}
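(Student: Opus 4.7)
I plan to verify the two conditions defining ``class-$\lambda$-accessible strongly $\lambda$-accessibly embedded in $\ck^\to$'': (i) $\crr$ is closed under $\lambda$-filtered colimits in $\ck^\to$, and (ii) the class $\cb:=\crr\cap\pres_\lambda\ck^\to$ consists of objects that are $\lambda$-presentable in $\ck^\to$ and every object of $\crr$ is a $\lambda$-filtered colimit of $\cb$-objects. Claim (i) is immediate from Corollary \ref{cor4.5}, since every morphism of $\cc$ is $\lambda$-presentable. Elements of $\cb$ are $\lambda$-presentable in $\ck^\to$ by definition, and by (i) they remain $\lambda$-presentable in $\crr$; this simultaneously yields that the inclusion $\crr\hookrightarrow\ck^\to$ preserves $\lambda$-presentable objects, delivering the strong $\lambda$-accessible embedding once (ii) is established.

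\textbf{Main construction.} Fix $g:C\to D$ in $\crr$. Since $\ck^\to$ is class-$\lambda$-accessible (Remark \ref{re3.8}(2)), Remark \ref{re2.3}(4) provides a small full subcategory $\ca\subseteq\pres_\lambda\ck^\to$, closed under $\lambda$-small $\lambda$-filtered colimits, such that $g$ is the canonical $\lambda$-filtered colimit of the diagram $\ca\downarrow g\to\ck^\to$, with cocone $(k_d=(u_d,v_d):h_d\to g)_{d\in\cd}$ and each $h_d:E_d\to F_d$ being $\lambda$-presentable. For each $d$, I apply Definition \ref{def4.7}(2) to factor $h_d=g_d f_d$ with $g_d:M_d\to F_d$ in $\crr$, $f_d:E_d\to M_d$ in $\cl$, and $M_d$ $\lambda$-presentable; hence $g_d\in\cb$. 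Since $f_d\in\cl={}^\square\crr$ and $g\in\crr$, the commuting square $gu_d=v_d g_d f_d$ admits a diagonal lift $w_d:M_d\to C$ with $w_d f_d=u_d$ and $gw_d=v_d g_d$. Thus $(w_d,v_d):g_d\to g$ is a morphism in $\cb\downarrow g$ through which $k_d$ factors as $(w_d,v_d)\circ(f_d,\id_{F_d})$.

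\textbf{Conclusion and main obstacle.} The construction above shows that $\cb\downarrow g$ is cofinal in $\ca\downarrow g$, from which I expect to conclude that $g$ is the $\lambda$-filtered colimit of the canonical diagram $\cb\downarrow g\to\ck^\to$, giving (ii). The main obstacle is the rigorous verification that $\cb\downarrow g$ is itself $\lambda$-filtered and that the cofinal inclusion genuinely transports the colimit from $\ca\downarrow g$: since the weak factorizations of Definition \ref{def4.7}(2) are not functorial choices, the collection of $g_d$'s does not form a subdiagram a priori, and one must argue directly with the entire comma category $\cb\downarrow g$. Concretely, $\lambda$-filteredness can be obtained by taking a $\lambda$-small subdiagram in $\cb\downarrow g$, forming its colimit in $\ck^\to$ (which remains $\lambda$-presentable and comes equipped with a canonical map to $g$), and applying Definition \ref{def4.7}(2) once more together with the lifting argument to produce an upper bound inside $\cb\downarrow g$; the colimit identification then follows because every morphism from a $\lambda$-presentable object of $\ck^\to$ into $g$ factors through some object of $\cb\downarrow g$ by the construction above, and $\ck^\to$ is class-$\lambda$-accessible.
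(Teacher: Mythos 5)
Your local steps are exactly the ones the paper uses: closure of $\crr$ under $\lambda$-filtered colimits via \ref{cor4.5}; factoring a $\lambda$-presentable $h=h_2h_1$ by \ref{def4.7}(2) so that $h_2$ is a $\lambda$-presentable object of $\crr$; and using $h_1\in\cl={}^\square\crr$ to lift any $(a,b)\colon h\to f$ with $f\in\crr$ to a factorization through $h_2$. The upper-bound argument you sketch for filteredness (colimit of a $\lambda$-small subdiagram, factor again, lift again) is also the paper's. So the mathematical content is right; the problem is where you choose to run it.

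The genuine gap is that your argument never produces a \emph{small} $\lambda$-filtered diagram with colimit $g$. You take $\cb=\crr\cap\pres_\lambda\ck^\to$, which is in general a proper class, so $\cb\downarrow g$ is a large category; but condition (2) of Definition \ref{def2.1} has to be witnessed by small $\lambda$-filtered colimits (these are the only colimits condition (1) guarantees, and Remark \ref{re2.3}(4) makes the smallness explicit) -- the colimit of a large diagram need not even exist in a class-locally presentable category. Your closing sentence, that the colimit identification ``follows because every morphism from a $\lambda$-presentable object into $g$ factors through some object of $\cb\downarrow g$,'' therefore does not land where you need it to; and the phrase ``$\cb\downarrow g$ is cofinal in $\ca\downarrow g$'' is not literally meaningful, since the middle objects $g_d$ need not lie in $\ca$, so $\cb\downarrow g$ is not a subcategory of $\ca\downarrow g$ and the standard cofinality lemma does not apply as stated. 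The missing idea is a closure device that keeps everything small while absorbing the (non-functorial) assignment $h\mapsto h_2$: the paper writes $\ck^\to$ as a union of a chain $\Ind_\lambda(\ca_i^\to)$ of locally $\lambda$-presentable subcategories (Remark \ref{re2.3}(3)), picks $i_0$ with $f\in\Ind_\lambda(\ca_{i_0}^\to)$, and iterates $\lambda$ times the passage $i_j\mapsto i_{j+1}$ so that $\ca_{i_{j+1}}^\to$ contains $h_2$ for every $h\in\ca_{i_j}^\to$; the small category $\cb=\ca_{i_\lambda}^\to\cap\crr$ is then an honest full subcategory of the small $\lambda$-filtered $\ca_{i_\lambda}^\to\downarrow f$, cofinal by your lifting argument and $\lambda$-filtered because any $\lambda$-small subcategory already lives at some stage $j<\lambda$ (regularity of $\lambda$) and hence has an upper bound whose $h_2$ lies in $\cb$. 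Without some such bookkeeping your proof stops one step short of the statement.
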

\begin{proof}
Following \ref{re3.8} (2), $\ck^\to$ is class-locally $\lambda$-presentable and the projections $P_1,P_2\colon \ck^\to\to\ck$ 
are strongly class-$\lambda$-accessible. Following \ref{cor4.5}, $\crr$ is closed in $\ck^\to$ under 
$\lambda$-filtered colimits. Consider a morphism $f \colon K\to L$ in $\crr$. We have to show that $f$ is a $\lambda$-filtered 
colimit of $\lambda$-presentable objects from $\crr$.
Following \ref{re2.3} (2), $\ck^\to$ is a union of a chain
$$
\Ind_\lambda(\ca_0^\to)\subseteq\Ind_\lambda(\ca_1^\to)\subseteq\dots\Ind_\lambda(\ca_i^\to)\subseteq\dots
$$
of locally $\lambda$-presentable categories and strongly class-$\lambda$-accessible functors. There is $i_0$ such that $f$ belongs to
$\Ind_\lambda(\ca_{i_0}^\to)$. Following \ref{cor4.4} and \ref{def4.7} (2), given a $\lambda$-presentable object $h$ of $\ck^\to$ ,
its weak factorization $h_2h_1$ has $h_2$ $\lambda$-presentable in $\ck^\to$ and thus in $\crr$. Since $\ca_{i_0}^\to$ is small, there 
is $i_1$ such that $\ca_{i_1}^\to$ contains all $h_2$ for $h$ from $\ca_{i_0}^\to$. Analogously, there is $i_2$ such that $\ca_{i_2}^\to$ 
contains all $h_2$ for $h$ from $\ca_{i_1}^\to$. We will continue this procedure, in limit steps we take $i_j$ as the supremum of all $i_k$ 
with $k<j$. Let $\cb$ be the intersection of $\ca_{i_\lambda}^\to$ and $\crr$. Consider a morphism $(a,b):h\to f$ with $h:A\to B$ 
in $\ca_{i_\lambda}^\to$. Then $h$ belongs to $\ca_{i_j}^\to$ for some $j<\lambda$. Thus $\cb$ contains $h_2$. Since $f$ is 
in $\crr$ and $h_1$ in $\cl$, there is $c:C\to K$ such that $fc=bh_2$ and $ch_1=a$; here $C$ is the middle object in the weak
factorization $h=h_2h_1$. Hence $(a,b)$ factorizes through $h_2$. It remains to show that $\cb\downarrow f$ is $\lambda$-filtered. 
Then it will be cofinal in $\ca_{i_\lambda}^\to\downarrow f$ and $f$ will be the canonical colimit of $\cb$-objects.

Let $\cx$ be a $\lambda$-small subcategory of $\cb\downarrow f$. There is $j<\lambda$ such that $\cx$ is a subcategory 
of $\ca_{i_j}\downarrow f$. Thus $\cx$ has an upper bound $(a_X,b_X):h\to f$, $h\in\cx$ in $\ca_{i_j}^\to\downarrow f$. 
Then $h_2$ is an upper bound of $\cx$ in $\cb\downarrow f$. This proves that the latter category is $\lambda$-filtered.
\end{proof}

\begin{coro}\label{cor4.10}
Let $\ck$ be a class-locally $\lambda$-presentable category with a $\lambda$-presentable terminal object and $(\cl,\crr)$ be 
a cofibrantly class-$\lambda$-generated weak factorization system. Then $\Inj(\cl)$ is a class-accessible category strongly accessibly 
embedded in $\ck$.
\end{coro}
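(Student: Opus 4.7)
The plan is to realize $\Inj(\cl)$ as a pseudopullback of strongly class-$\lambda$-accessible functors and invoke Proposition~\ref{prop3.1}. On one side I take the composite
\[
U\colon \crr \hookrightarrow \ck^\to \xrightarrow{\; P_2\;} \ck,
\]
where the inclusion is strongly class-$\lambda$-accessible by Theorem~\ref{th4.9} and the codomain projection $P_2$ is strongly class-$\lambda$-accessible by Remark~\ref{re3.8}(2); so $U$ is strongly class-$\lambda$-accessible. On the other side I take $V\colon\mathbf{1}\to\ck$ from the one-morphism category picking out the object $1$; since $1$ is $\lambda$-presentable by hypothesis, $V$ is trivially strongly class-$\lambda$-accessible. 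Forming the pseudopullback $\cp:=\PsPb(U,V)$, Proposition~\ref{prop3.1} then gives that $\cp$ is class-$\lambda^+$-accessible and that both projections are strongly class-$\lambda^+$-accessible.

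The key step is to identify $\cp$ with $\Inj(\cl)$. An object of $\cp$ is a tuple $(r,\ast,M,\alpha,\beta)$ with $r\colon K\to L$ in $\crr$, $\alpha\colon L\xrightarrow{\sim} M$ and $\beta\colon 1\xrightarrow{\sim} M$. Since $\beta$ is an iso to a terminal object, $M$, and hence $L$, is itself terminal; consequently $r$ is the unique arrow $K\to L$, and the condition $r\in\crr=\cl^\square$ is equivalent to the unique morphism $K\to 1$ lying in $\crr$, i.e.\ to $K\in\Inj(\cl)$. Sending $(r,\ast,M,\alpha,\beta)\mapsto\dom(r)$ therefore yields an equivalence $\cp\simeq\Inj(\cl)$; on morphisms it is a bijection because the commuting-square data for $r\to r'$ is pinned down by its domain component (all codomain and auxiliary components being forced by terminality), and every morphism $K\to K'$ in $\ck$ lifts to such a tuple.

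Under this equivalence the inclusion $\Inj(\cl)\hookrightarrow\ck$ corresponds to the composite
\[
\cp \longrightarrow \crr \hookrightarrow \ck^\to \xrightarrow{\; P_1\;} \ck,
\]
whose first arrow is the pseudopullback projection onto $\crr$. This first arrow is strongly class-$\lambda^+$-accessible by Proposition~\ref{prop3.1}, while the other two are strongly class-$\lambda$-accessible and hence also strongly class-$\lambda^+$-accessible by Remark~\ref{re2.8} (using $\lambda\vartriangleleft\lambda^+$). The composite is thus strongly class-$\lambda^+$-accessible, simultaneously delivering that $\Inj(\cl)$ is class-$\lambda^+$-accessible and that it is strongly accessibly embedded in $\ck$. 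The only genuinely non-routine verification is the pseudopullback identification above; the rest is bookkeeping with previously established facts.
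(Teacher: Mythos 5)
Your proof is correct, but it follows a genuinely different route from the paper's. The paper handles this corollary in one line: since $K\in\Inj(\cl)$ exactly when the unique morphism $K\to 1$ lies in $\crr$, and since the $\lambda$-presentability of $1$ makes Definition~\ref{def4.7}(2) applicable to morphisms $A\to 1$ with $A$ $\lambda$-presentable, the transfinite chain argument in the proof of Theorem~\ref{th4.9} can be repeated with $\ck^\to$ replaced by $\ck$, weak factorizations $h=h_2h_1$ replaced by weak reflections $A\to A^\ast$ into $\Inj(\cl)$, and the factorization of $(a,b)\colon h\to f$ through $h_2$ replaced by the factorization of $a\colon A\to K$ through $A^\ast$ obtained from the lifting property; this yields class-$\lambda$-accessibility at the same index $\lambda$. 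You instead exhibit $\Inj(\cl)$ as the pseudopullback of $U\colon\crr\to\ck$ and $V\colon\mathbf 1\to\ck$ and invoke Proposition~\ref{prop3.1}; the hypothesis that $1$ is $\lambda$-presentable enters exactly to make $V$ strongly class-$\lambda$-accessible, and your identification of $\PsPb(U,V)$ with $\Inj(\cl)$ is sound (terminality of $M$ forces all auxiliary data, and $\cl^\square$ is closed under postcomposition with isomorphisms). The trade-off is that your argument is more modular --- it reuses the general closure theorem instead of re-running a transfinite construction --- but it only delivers class-$\lambda^+$-accessibility rather than class-$\lambda$-accessibility; since the corollary asserts only ``class-accessible,'' this is harmless. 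One point worth making explicit: \emph{strongly accessibly embedded} requires that $\Inj(\cl)$ be \emph{closed} under $\mu$-filtered colimits in $\ck$, not merely that the inclusion preserve them; this does follow in your setup (a $\lambda^+$-filtered colimit computed in $\cp$ is preserved by the composite into $\ck$, hence is the colimit in $\ck$ and lies in the full subcategory), and it also follows directly from Corollary~\ref{cor4.4} because $\Inj(\cl)=\Inj(\cc)$, but it deserves a sentence.
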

\begin{proof}
Since a weak reflection of an object $K$ in $\Inj(\cl)$ is given by a weak factorization of its morphism $K\to 1$ to a terminal object, 
the result follows from the proof of \ref{th4.9}.
\end{proof}
 
\begin{theo}\label{th4.11}
Let $\ck$ be a class-locally presentable category, $\cc$ a cone-coreflective class of morphisms of $\ck$ and
assume that there is a regular cardinal $\lambda$ such that each morphism from $\cc$ is $\lambda$-presentable.
Then $(\colim(\cc),\cc^\perp)$ is a factorization system in $\ck$.
\end{theo}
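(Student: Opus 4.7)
The plan is to reduce to Theorem \ref{th4.3} by enlarging $\cc$ with codiagonals. For each $h \colon A \to B$ in $\cc$ let $\nabla_h \colon B \sqcup_A B \to B$ denote the codiagonal of the cokernel pair of $h$, set $\nabla(\cc) = \{\nabla_h : h \in \cc\}$, and define $\cc' = \cc \cup \nabla(\cc)$. The underlying idea is the classical observation that orthogonality equals lifting plus uniqueness, with $\nabla_h$ encoding the uniqueness condition: a diagonal filler for a square $h \to g$ is unique precisely when $g$ has the right lifting property against $\nabla_h$. Consequently $(\cc')^\square = \cc^\square \cap \nabla(\cc)^\square = \cc^\perp$.

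I would then verify the hypotheses of Theorem \ref{th4.3} for $\cc'$. Each $\nabla_h$ has $\lambda$-presentable domain because $\lambda$-presentable objects are closed under finite colimits, so $B \sqcup_A B$ is $\lambda$-presentable whenever both $A$ and $B$ are. Cone-coreflectivity of $\cc'$ follows from Remark \ref{re4.2}(1) once it is verified for $\nabla(\cc)$: given $f \colon X \to Y$, a morphism $\nabla_h \to f$ corresponds to a pair $(p_1, v), (p_2, v) \colon h \to f$ with $p_1 h = p_2 h$, and a suitable cone-coreflection set of codiagonals can be built from $\cc_f$ by combining the two independent factorizations into a common refinement, using $\lambda$-presentability to keep the set small.

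By Theorem \ref{th4.3}, the above yields a weak factorization system $(\cof(\cc'), \cc^\perp)$, which already supplies the desired factorizations. To identify the left class with $\colim(\cc)$, observe that each $\nabla_h$ is the pushout in $\ck^\to$ of the span $\id_B \xleftarrow{(h,\id_B)} h \xrightarrow{(h,\id_B)} \id_B$, which pointwise produces $B \sqcup_A B$ on top and $B$ on bottom with induced arrow $\nabla_h$; hence $\nabla_h \in \colim(\cc)$, so $\cof(\cc') \subseteq \colim(\cc)$. Conversely $\colim(\cc) \subseteq {}^\perp(\cc^\perp)$, since $\cc \subseteq {}^\perp(\cc^\perp)$ and the left orthogonal of any class is closed under colimits in $\ck^\to$; and given $f \in {}^\perp(\cc^\perp)$, the factorization $f = g h$ with $h \in \cof(\cc') \subseteq \colim(\cc)$ and $g \in \cc^\perp$ supplies a unique diagonal exhibiting $f$ as a retract of $h$, whence $f \in \colim(\cc)$ by closure under retracts.

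The orthogonality identities $\colim(\cc)^\perp = \cc^\perp$ and $\colim(\cc) = {}^\perp(\cc^\perp)$ then follow, completing the verification that $(\colim(\cc), \cc^\perp)$ is a factorization system. The main obstacle I foresee is cone-coreflectivity of $\nabla(\cc)$: combining two independent factorizations of paired morphisms through elements of $\cc_f$ into a single codiagonal-factorization requires a careful refinement construction in $\ck^\to$, where the $\lambda$-presentability of the morphisms in $\cc$ plays an essential role in ensuring the refining set remains small.
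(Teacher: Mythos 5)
Your overall strategy is the paper's: enlarge $\cc$ by the codiagonals of cokernel pairs (your $\nabla_h$ is the paper's $h^\ast\colon B\sqcup_AB\to B$), apply Theorem \ref{th4.3} to $\bar\cc=\cc\cup\cc^\ast$, identify $\bar\cc^\square$ with $\cc^\perp$ by Bousfield's lifting-plus-uniqueness observation, and identify the left class with $\colim(\cc)$. However, the one step you yourself flag as the main obstacle --- cone-coreflectivity of the class of codiagonals --- is precisely the substance of the paper's proof, and the route you sketch for it would not go through. A square $\nabla_h\to f$ with $f\colon X\to Y$ amounts to a \emph{pair} of morphisms $up_1,up_2\colon B\to X$ agreeing after precomposition with $h$ and after postcomposition with $f$; this is a joint condition on the pair. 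Factoring $(up_1,v)$ and $(up_2,v)$ separately through the weakly terminal set $\cc_f$ lands you on two a priori unrelated elements $e_1,e_2\in\cc_f$ with no compatible data, and there is no evident ``common refinement'' producing a single $e$ and a single factorization of the square $\nabla_h\to f$ through $\nabla_e$. In particular, $\lambda$-presentability does not rescue this step (contrary to what you anticipate); in the paper it is used only where you already use it correctly, namely to see that the domains $B\sqcup_AB$ stay $\lambda$-presentable so that Theorem \ref{th4.3} applies.

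The missing idea is to dualize before invoking cone-coreflectivity: form the kernel pair $D_\ast$ of $f$ (the pullback of $f$ along itself, which exists since $\ck$ is complete) with its diagonal $f_\ast\colon X\to D_\ast$, and observe that commutative squares $h^\ast\to f$ correspond bijectively to commutative squares $h\to f_\ast$. Cone-coreflectivity of $\cc$ applied to the single morphism $f_\ast$ (rather than to $f$) yields a weakly terminal set $\cc_{f_\ast}$ in $\cc\downarrow f_\ast$, and transporting it back across the bijection shows that $(\cc_{f_\ast})^\ast$ is weakly terminal in $\cc^\ast\downarrow f$; this is exactly how the joint condition on the pair is handled in one stroke. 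With that in place the rest of your outline matches the paper, which quotes \cite{Bo} for $\bar\cc^\square=\cc^\perp$ and \cite{FR} for $\cof(\bar\cc)=\colim(\cc)$. One small caveat on your version of the latter identification: presenting $\nabla_h$ as a pushout in $\ck^\to$ of the span $\id_B\leftarrow h\rightarrow\id_B$ only places it in $\colim(\cc)$ once you know $\id_B\in\colim(\cc)$ for arbitrary $B$, which requires an argument (or the convention that $\colim(\cc)$ contains isomorphisms) and is part of what \cite{FR}, 2.2 supplies.
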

\begin{proof}
Given a morphism $f:A\to B$, we form the pushout of $f$ and $f$ and denote by $f^\ast$ a unique morphism making the following 
diagram commutative
$$
\xymatrix@C=4pc@R=4pc{
A\ar[r]^f \ar[d]_f &
B\ar[d]^{p_2}\ar[ddr]^{\id_B}&\\
B\ar[r]_{p_1}\ar[drr]_{\id_B}& A^\ast \ar[dr]^{f^\ast}&\\
&&B
}
$$
Let $\bar{\cc}=\cc\cup\cc^\ast$ where $\cc^\ast=\{f^\ast \,|\, f\in\cc\}$. Since pushouts of $\lambda$-presentable
objects are $\lambda$-presentable, each morphism from $\bar{\cc}$ has a $\lambda$-presentable domain. It suffices to show 
that the class $\bar{\cc}$ is cone-coreflective. Then, following \ref{th4.3}, $(\cof(\bar{\cc}),\bar{\cc}^\square)$ is 
a weak factorization system. Following the proof of 4.1 in \cite{Bo}, we conclude that $\bar{\cc}^\square=\cc^\perp$ and
$(\cof(\bar{\cc}),\cc^\perp)$ is a factorization system. Finally, the proof of 2.2 in \cite{FR} shows that 
$\cof(\bar{\cc})=\colim(\cc)$.

We have to prove that the class $\cc^\ast$ is cone-coreflective. Let $g$ be a morphism in $\ck$ and take the pullback
$$
\xymatrix@C=3pc@R=3pc{
D_\ast \ar [r]^{q_1} \ar [d]_{q_2}& C \ar [d]^{g}\\
C\ar [r]_{g}& D
}
$$
We have a unique $g_\ast:C\to D_\ast$ such that $q_ig_\ast=\id_C$ for $i=1,2$.
We will show that commutative squares
$$
\xymatrix@C=3pc@R=3pc{
A^\ast \ar [r]^{u} \ar [d]_{f^\ast}& C \ar [d]^{g}\\
B\ar [r]_{v}& D
}
$$
with $f\in\cc$ uniquely correspond to commutative squares
$$
\xymatrix@C=3pc@R=3pc{
A \ar [r]^{t} \ar [d]_{f}& C \ar [d]^{g_\ast}\\
B\ar [r]_{h}& D_\ast
}
$$
The correspondence assigns to $u$ and $v$ the pair $t,h$ where $t=up_1f$ and $h$ is given by $q_ih=up_i$ for $i=1,2$ (because $gup_1=v=gup_2$). 
Since $q_ihf=up_if=t=q_ig_\ast t$ for $i=1,2$, we have $hf=g_\ast t$. Conversely, given $t$ and $h$, we get $u$ and $v$ by means of $up_i=q_ih$ 
for $i=1,2$ and $v=gq_1h$. Since $gup_i=gq_ih=v=vf^\ast p_i$ for $i=1,2$, we have $gu=vf^\ast$. The one-to-one correspondence between $u$ and $h$ 
is evident. Since
$$
v=vf^\ast p_1=gup_1=gq_1h
$$
and
$$
t=q_1g_\ast t=q_1hf=up_1f,
$$
there is a one-to-one correspondence between $v$ and $t$ as well.
 
Since the class $\cc$ is cone-coreflective, the comma-category $\cc\downarrow g_\ast$ has a weakly terminal set 
$\cc_{g_\ast}$. We will show that the corresponding set $(\cc_{g_\ast})^\ast$ is weakly terminal in $\cc^\ast\downarrow g$.
Given $(u,v):f^\ast\to g$, we take the corresponding $(t,h):f\to g_\ast$. There is a factorization
$$
(t,h):f\xrightarrow{\quad  (t_1,h_1)\quad} e
             \xrightarrow{\quad (t_2,h_2)\quad} g_\ast
$$
with $e:X\to Y$ in $\cc_{g_\ast}$. Let $(u_2,v_2):e^\ast\to g$ corresponds to $(t_2,h_2)$, $v_1=h_1$ and $u_1:A^\ast\to X^\ast$ is
induced by $u_1p_i=\overline{p}_ih_1$ for $i=1,2$ where $\overline{p}_i$ are from the pushout defining $e^\ast$.  Since
$$
u_2u_1p_i=u_2\overline{p}_ih_1=q_ih_2h_1=q_ih=up_i
$$
for $i=1,2$, we have $u_2u_1=u$. Further,
$$
v_2v_1=(gq_1h_2)h_1=gq_1h=v.
$$
Finally, since 
$$
e^\ast u_1p_i=e^\ast\overline{p}_ih_1=h_1=v_1=v_1f^\ast p_i
$$
for $i=1,2$, we have $d^\ast u_1= v_1f^\ast$. Hence we get a factorization
$$
(u,v):f^\ast\xrightarrow{\quad  (u_1,v_1)\quad} e^\ast
             \xrightarrow{\quad (u_2,v_2)\quad} g
$$
with $e^\ast\in(\cc_{g_\ast})^\ast$. 
\end{proof} 

\begin{coro}\label{cor4.12}
Let $\ck$ be a class-locally $\lambda$-presentable category and $\cc$ a cone-coreflective class of $\lambda$-presentable morphisms of $\ck$.
Then $\Ort(\cc)$ is reflective and closed under $\lambda$-filtered colimits in $\ck$. Moreover, $\Ort(\cc)$ is class-locally $\lambda$-presentable.  
\end{coro}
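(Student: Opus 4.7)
The plan is to deduce everything from Theorem~\ref{th4.11} applied to $\cc$, which gives the factorization system $(\colim(\cc),\cc^\perp)$ on $\ck$. Recall that $\Ort(\cc)$ consists of those $K$ for which $K\to 1$ lies in $\cc^\perp$ (the terminal object $1$ exists since $\ck$ is complete).

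First I would construct the reflection. Given $K\in\ck$, apply Theorem~\ref{th4.11} to factor $K\to 1$ as $K\xrightarrow{r}K^\ast\to 1$ with $r\in\colim(\cc)$ and $K^\ast\to 1\in\cc^\perp$. Then $K^\ast\in\Ort(\cc)$. To see $r$ is a reflection (not merely a weak one), given any $f\colon K\to L$ with $L\in\Ort(\cc)$, the square formed by $r$, $f$, $K^\ast\to 1$ and $L\to 1$ admits a unique diagonal $d\colon K^\ast\to L$ with $dr=f$, because $r\in\colim(\cc)\subseteq{}^\perp(\cc^\perp)$ and $L\to 1\in\cc^\perp$. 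This is the standard factorization-system argument and gives both existence and uniqueness of the reflection.

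Next I would verify closure under $\lambda$-filtered colimits. Let $K=\colim k_d\colon K_d\to K$ be a $\lambda$-filtered colimit in $\ck$ with each $K_d\in\Ort(\cc)$, and take $f\colon A\to B$ in $\cc$ together with $u\colon A\to K$. Since $A$ is $\lambda$-presentable, $u$ factors as $u=k_d u'$ for some $d$; since $K_d\in\Ort(\cc)$, there is a unique $v'\colon B\to K_d$ with $v'f=u'$, whence $v=k_dv'$ gives $vf=u$. For uniqueness, two such fillers $v_1,v_2\colon B\to K$ both factor through some common $K_e$ (using $\lambda$-presentability of $B$ and $\lambda$-filteredness), and after enlarging $e$ the two factorizations agree on precomposition with $f$; orthogonality in $K_e$ then forces them to coincide.

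Finally I would upgrade this to class-local $\lambda$-presentability. Completeness of $\Ort(\cc)$ follows from reflectivity in the complete category $\ck$ (limits in a reflective subcategory are computed in the ambient category). Cocompleteness follows by reflecting ambient colimits via $r$. For class-$\lambda$-accessibility, note that since the inclusion $\Ort(\cc)\hookrightarrow\ck$ preserves $\lambda$-filtered colimits (by the previous step), its left adjoint $r$ preserves $\lambda$-presentable objects; so if $A\in\pres_\lambda\ck$ then $r(A)\in\pres_\lambda\Ort(\cc)$. Given $K\in\Ort(\cc)$, write it in $\ck$ as a $\lambda$-filtered colimit $K=\colim K_d$ of $\lambda$-presentable objects (using that $\ck$ is class-$\lambda$-accessible); applying $r$ yields $K=r(K)=\colim r(K_d)$, a $\lambda$-filtered colimit in $\Ort(\cc)$ of objects $\lambda$-presentable there. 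The main delicate point is that the reflector preserves $\lambda$-presentable objects, but this is immediate once the inclusion is known to preserve $\lambda$-filtered colimits.
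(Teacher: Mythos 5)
Your proof is correct and follows the paper's overall strategy: everything is derived from the factorization system $(\colim(\cc),\cc^\perp)$ of Theorem~\ref{th4.11}, with the reflection obtained by factoring $K\to 1$. Where you differ is in how the remaining two claims are justified. For closure under $\lambda$-filtered colimits the paper does not argue directly; it observes that $\Ort(\cc)=\Inj(\bar{\cc})$ for the auxiliary class $\bar{\cc}=\cc\cup\cc^\ast$ constructed inside the proof of Theorem~\ref{th4.11} (the ``fold maps'' $f^\ast$ encode uniqueness of fillers as an injectivity condition) and then quotes Corollary~\ref{cor4.4}. Your direct argument --- existence of the filler from $\lambda$-presentability of the domain, uniqueness from $\lambda$-presentability of the codomain plus $\lambda$-filteredness --- is the same computation unfolded, and it correctly uses that morphisms in $\cc$ are $\lambda$-presentable on both ends. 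For class-local $\lambda$-presentability the paper simply invokes Theorem~\ref{th2.6} (a reflective subcategory of $\cp(\ca)$ closed under $\lambda$-filtered colimits is class-locally $\lambda$-presentable), whereas you reprove the relevant implication from scratch: the reflector preserves $\lambda$-presentable objects because the inclusion preserves $\lambda$-filtered colimits, and applying it to a presentation of $K$ in $\ck$ yields one in $\Ort(\cc)$. This is precisely the argument used in the converse direction of the proof of Theorem~\ref{th2.6}, so nothing is lost; your version is more self-contained, the paper's is shorter because it reuses its earlier machinery.
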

\begin{proof}
A reflection of $R(K)$ is given by a $(\colim(\cc),\cc^\perp)$ factorization
$$
K \xrightarrow{\quad  r\quad} R(K)
             \xrightarrow{\quad \quad} 1.
$$
Since $\Ort(\cc)=\Inj(\bar{\cc})$ where $\bar{\cc}$ is from the proof of \ref{th4.11}, $\Ort(\cc)$ is closed 
under $\lambda$-filtered colimits in $\ck$ (following \ref{cor4.4}). Following \ref{th2.6}, $\Ort(\cc)$ is class-locally 
$\lambda$-presentable.  
\end{proof}

\begin{rem}\label{re4.13}
{
\em
(1) Given $\cc$ from \ref{cor4.12}, we show in the same way as in \ref{th4.9} that $\cc^\perp$ is class-locally $\lambda$-presentable.

Concerning $\colim(\cc)$, we know that it is coreflective in $\ck^\to$ and the coreflector 
$$
R:\ck^\to\to\colim(\cc)
$$ 
preserves $\lambda$-filtered colimits. Hence $\colim(\cc)$ is a full image of a class-accessible functor. But we do not
know whether $R:\ck^\to\to\ck^\to$ is strongly class-accessible and we thus do not know whether $\colim(\cc)$ is class-locally
presentable. What is missing is the condition \ref{def4.7} (2).

(2) Let $\ck$ be a class-locally $\lambda$-presentable category written as a union of a chain of locally 
$\lambda$-presentable full strongly $\lambda$-accessibly embedded subcategories $\ck_i$ (see \ref{re3.10}). Let
$\cc$ be a class-$\lambda$-accessible full subcategory of $\ck$ such that $(\cc\cap\ck_i)^\perp$ 
(calculated in $\ck_i$) is reflective in $\ck_i$. Then $\cc^\perp$ is reflective in $\ck$. This follows from the fact
that, given a $\lambda$-filtered colimit $f=\colim (f_d)_{d\in\cd}$, then
$$
\bigcap\limits_{d\in\cd} f_d^\perp\subseteq f^\perp
$$
 
This observation is behind the proof that each small scheme has a sheafification with respect to the flat topology
(see \cite{W}, small functors are called basically bounded there and 3.1 shows that faithfully flat morphisms 
are class-finitely-accessible). It would be interesting to know whether this is true for the etale topology as well.
}
\end{rem}

\end{document}